\newtheorem{theorem}{Theorem}
\newcommand{\be}{\begin{equation}}
\newcommand{\ee}{\end{equation}}
\begin{document}

%

\title{\bf Bogdanov-Takens bifurcation of codimension $3$ in the Gierer-Meinhardt model }
\author{\small Ranchao Wu\footnote{Corresponding author. E-mail address: rcwu@ahu.edu.cn (R. C. Wu)} and Lingling Yang\\
\baselineskip 12pt
\centerline{\small {\it Center for Pure Mathematics and School of Mathematical Sciences,}}\\
\centerline{\small{\it Anhui University, Hefei 230601, China }}}
\vskip.15in
\date{}

\maketitle

\begin{quote}
\noindent {\bf Abstract.}
Bifurcation of the local Gierer-Meinhardt model is analyzed in this paper. It is found that the degenerate Bogdanov-Takens bifurcation
of codimension 3 happens in the model, except that teh saddle-node bifurcation and the Hopf bifurcation. That was not reported in the
existing results about this model. The existence of equilibria, their stability,  the  bifurcation and the induced complicated and
interesting dynamics are explored in detail, by using the stability analysis, the normal form method and bifurcation theory.
Numerical results are also presented to  validate theoretical results.

\noindent {\bf Key words:}
Gierer-Meinhardt model, Saddle-node, Hopf, Bogdanov-Takens bifurcation

\end{quote}

\section{Introduction}\label{section-1}
Early in \cite{Turing}, Turing  discovered the common properties of the breakdown of spatial-temporal symmetry and the self-organization,
selection, and stability of new spatial-temporal structures in systems, and proposed the idea of patterns as the results
of diffusion driven instability.  Since then  more and more interests are focused on the Turing patterns and various models are put forward to
describe the diffusion driven instability. One of the important models is the Gierer-Meinhardt model \cite{GM}, which was proposed by Gierer and Meinhardt
in 1972, and takes the  following form
\begin{equation*}
\begin{cases}
\frac{\partial a}{\partial t} =
\rho_0\rho+c\rho\frac{a^r}{h^s} -ua+D_a \frac{\partial^2 a}{\partial x^{2} },
\\
\frac{\partial h}{\partial t} =c'\rho'\frac{a^T}{h^u}-vh+D_h \frac{\partial^2 h}{\partial x^{2} }.
\end{cases}
\end{equation*}
where $a(x,t)$ and $h(x,t)$ respectively represent the concentration of activators and inhibitors at spatial position $x$ and time $t > 0$.
$\rho_0\rho$ and $\rho'$ are the source concentration of $a(x,t)$ and $h(x,t)$, respectively. The first-order kinetics of activator and
inhibitor are represented by $u_a$ and $v_h$, respectively. $D_a$ and $D_h$ represent the diffusion coefficients of activators and inhibitors,
respectively. Generally, it is necessary to assume $\frac{sT}{u+1}>r-1>0$, that is, $r\ge2(r\in\mathbb{Z} )$.

In view of Turing's idea about pattern formation, to explore the patterns in such model, it is very necessary to carry out the stability and instability
analysis. Instability will be accompanied by bifurcation in the model. Then spatiotemporal patterns will follow from the different bifurcation.
Until not, various results about bifurcation and the resulting complex dynamics in the Gierer-Meinhardt model have been obtained.
When $r=2, s=1, T=2$, and $u=0$, Song et al. \cite{S} studied the Gierer-Meinhardt model with saturation terms and obtained the pattern formation
in the certain parameter space.  The Hopf bifurcation, the effect of diffusion on the stability and the subsequent Turing pattern were presented in
 \cite{W}. For the delayed a delayed reaction-diffusion Gierer-Meinhardt system, the bifurcation analysis was also carried out in \cite{YR}. With the
 different sources for  activators and inhibitors, Hopf bifurcation was treated in \cite{Rasoul}. For the codimension-2 bifurcation, in \cite{SONG}
the Turing-Hopf bifurcation was considered, without the saturation term. The Turing-Turing bifurcation was given in \cite{Zhao}, the coexistence of
 multi-stable patterns with different spatial responses and the superposition for patterns were demonstrated.

Recently, some results are obtained about the localized patterns in the Gray-Scott system and the bifurcation of the general
Gierer-Meinhardt model in \cite{F}. The local one-dimensional Gierer-Meinhardt model was given by
\begin{equation}\label{system1.1}
\begin{cases}
\frac{\partial u}{\partial t}= a+\frac{ u^{2} }{v}-u,\\
\frac{\partial v}{\partial t}=b+u^{2}-dv.
\end{cases}
\end{equation}
where $a$, $b$ and $d$  are all positive constants. However, when $a=0$, the system still has more complex dynamics and could be further explored.
In this work, it is found that the model could admit the saddle-node, the Hopf and the degenerate Bogdanov-Takes bifurcations of codimension-3,
which is not absent in the system in  \cite{F}. Note that highly degenerate bifurcations are more difficult to deal with and the resulting dynamical behaviors are richer and more interesting, so they are attracting increasing interests from mathematics and applied sciences. For example,
degenerate bifurcations and the induced complicated dynamics were presented in\cite{Etoua,Shang,AA}, such as the nilpotent cusp singularity of
order 3 and the degenerate Hopf bifurcation of codimension 3. In \cite{HXZR}, Huang et al. discovered that there existed a degenerate
Bogdanov-Takens singularity (focus case) of codimension 3 in the predator-prey model. In \cite{SLH},  the Bogdanov-Takens of codimension 3 and
the Hopf bifurcation of codimension 2 were also found to happen.

In this paper, we will elaborate on these aspects for system (\ref{system1.1}) with $a=0$.
The existence and their stability of  equilibrium points are introduced in Section 2. Bifurcations, such as, the saddle-node bifurcation,
the Hopf bifurcation and the Bogdanov-Takes bifurcation of codimension-3 are presented in Section 4. Finally, a brief summary is made in Section 5.

\section{Existence and stability of equilibria}\label{section-2}

Now consider the system  in the following form
\begin{equation}\label{system2.1}
\begin{cases}
	\frac{d u}{d t}= c \left ( \frac{\beta  u^{2} }{v}-u  \right ),\\
\frac{d v}{d t}=b+u^{2}-dv.
\end{cases}
\end{equation}
Let $f(u,v)=c \left ( \frac{\beta  u^{2} }{v}-u  \right )$,
$g(u,v)=b+u^{2}-dv$. Upon solving $f(u,v)=0$, we obtain the solutions $u=0$ or $v=u\beta$.

It is not difficult to  get the boundary equilibrium $(0,\frac{b}{d})$ of system (\ref{system2.1}).
Next, to find the existence of positive equilibria of system, substitute $v=u\beta$ into $g(u,v)=0$, then we have
\begin{equation*}
h(u) \triangleq u^{2}-d\beta u+b=0.
\end{equation*}
The discriminant of $h(u)$ is
\begin{equation*}
\Delta =d^{2} \beta ^{2} -4b.
\end{equation*}
It follows that

(i) if $d^{2} \beta ^{2} <4b$, then $h(u)>0$ for $u>0$;

(ii) if $d^{2} \beta ^{2} =4b$, then $h(u)$ has a real root $u_{1} =\frac{d\beta }{2}$;

(iii) if $d^{2} \beta ^{2} >4b$, then $h(u)$ has two distinct  positive real roots,
\begin{align*}
u_{2} =\frac{d\beta +\sqrt{\Delta} }{2 }, \qquad u_{3} =\frac{d\beta -\sqrt{\Delta} }{2 } .
\end{align*}

So we have the following result.
\begin{theorem}
 System (\ref{system2.1}) has only one boundary equilibrium
$E_{0}\left ( 0,v_{0}  \right )=\left (0,\frac{b}{d}  \right ) $, and

\;\;\;\;(i) if $d^{2} \beta ^{2} <4b$, then there is no positive equilibria;

\;\;\;\;(ii) If $d^{2} \beta ^{2} =4b$, then there is a positive equilibrium
$E_{1}\left ( u_{1},v_{1}  \right )=\left (\frac{d\beta}{2} ,\frac{d\beta^{2} }{2} \right ) $;

\;\;\;\;(iii) If $d^{2} \beta ^{2} >4b$,then there are two positive equilibrium
$E_{2}\left ( u_{2},v_{2}  \right )=\left (\frac{d\beta +\sqrt{\Delta} }{2 },\frac{d\beta^{2}  +\beta \sqrt{\Delta} }{2 } \right ) $
and $E_{3}\left ( u_{3},v_{3}  \right )=\left (\frac{d\beta -\sqrt{\Delta} }{2 } ,
\frac{d\beta^{2}  -\beta \sqrt{\Delta} }{2 } \right ) $.
\end{theorem}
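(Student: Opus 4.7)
The plan is a direct algebraic existence argument, splitting on the factorization of $f(u,v)=0$ and then handing the $v$-equation off to the quadratic $h(u)$ already introduced in the text.

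First I would observe that for any equilibrium we need $v>0$ (otherwise $f$ is undefined), so we may clear denominators in $f(u,v)=0$ and rewrite it as $c\,u(\beta u-v)=0$. This gives the two mutually exclusive branches $u=0$ and $v=\beta u$. On the branch $u=0$, the second equation $g(0,v)=b-dv=0$ yields the unique point $v=b/d>0$, which is the claimed boundary equilibrium $E_{0}=(0,b/d)$; since $b,d>0$, this point always exists and lies in the admissible region.

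Next I would study the interior branch $v=\beta u$ with $u>0$. Substituting into $g(u,v)=0$ produces exactly the quadratic $h(u)=u^{2}-d\beta u+b=0$ already derived, with discriminant $\Delta=d^{2}\beta^{2}-4b$. I would then distinguish the three cases based on the sign of $\Delta$: (i) if $\Delta<0$, then $h(u)>0$ for all real $u$, so no interior equilibrium exists; (ii) if $\Delta=0$, the unique real root $u_{1}=d\beta/2$ is positive (since $d,\beta>0$), giving $v_{1}=\beta u_{1}=d\beta^{2}/2$; (iii) if $\Delta>0$, both roots $u_{2,3}=(d\beta\pm\sqrt{\Delta})/2$ are real, and positivity of both follows from $\sqrt{\Delta}=\sqrt{d^{2}\beta^{2}-4b}<d\beta$, which uses $b>0$. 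The corresponding $v$-coordinates are $v_{i}=\beta u_{i}$, reproducing the stated formulas for $E_{2}$ and $E_{3}$.

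The only subtle point—and the closest thing to an obstacle—is justifying that the $u=0$ branch indeed yields a legitimate equilibrium despite the apparent singularity $\beta u^{2}/v$ at $u=0$: one must interpret $f$ by continuity (since $u^{2}/v\to 0$ as $u\to 0$ with $v=b/d>0$ fixed), so $E_{0}$ is well-defined. Everything else is the elementary quadratic-root analysis sketched above, and uniqueness of the boundary equilibrium follows from the $u=0$ branch giving a single admissible $v$. Combining the two branches exhausts all equilibria and establishes the trichotomy stated in the theorem.
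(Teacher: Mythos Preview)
Your proposal is correct and follows essentially the same route as the paper: factor $f(u,v)=0$ into the branches $u=0$ and $v=\beta u$, read off $E_0$ from the first, and reduce the second to the quadratic $h(u)=u^{2}-d\beta u+b$ whose discriminant $\Delta$ governs the trichotomy. You add two small refinements the paper leaves implicit---the positivity check $\sqrt{\Delta}<d\beta$ for both roots in case (iii), and the remark that $f$ is perfectly well-defined at $u=0$ since $\beta u^{2}/v=0$ there---but neither changes the argument.
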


Next the stability of the equilibria   system (\ref{system2.1}) will be examined. First consider the boundary equilibrium
$E_{0}\left ( 0,v_{0}  \right )$. The Jacobian matrix of system \ (\ref{system2.1}) at equilibirum $E_{0}$ is
\begin{equation*}\
	J_{E_{0} }=\begin{pmatrix}-1
		&0 \\
		0 &-d
	\end{pmatrix} ,
\end{equation*}
which has the eigenvalues $\lambda _{1}=-1<0$ and $\lambda _{2}=-d<0$.
Therefore, the equilibrium $E_{0}$ of system \ (\ref{system2.1}) is a stable node.

As for the stability of the equilibrium $E_{1}$, we have
\begin{theorem}

(a) If $d=c$, then $E_{1}$ is a cusp of codimension three;

(b) If $d>c$, then $E_{1}$ is a saddle-node with an unstable parabolic sector;

(b) If $d<c$, then $E_{1}$ is a saddle-node with a stable parabolic sector.
\end{theorem}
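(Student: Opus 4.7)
The natural first move is to compute the Jacobian directly. From $f_u=c(2\beta u/v-1)$, $f_v=-c\beta u^2/v^2$, $g_u=2u$, $g_v=-d$, evaluated at $(u_1,v_1)=(d\beta/2,d\beta^2/2)$, one finds
$$
J_{E_1}=\begin{pmatrix} c & -c/\beta\\ d\beta & -d\end{pmatrix},\qquad \det J_{E_1}=0,\qquad \mathrm{tr}\,J_{E_1}=c-d.
$$
Hence one eigenvalue is always $0$ and the other equals $c-d$; this trichotomy $c\lessgtr d$ is exactly what partitions the argument into the three claimed regimes.

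For the subcases $c\ne d$, $J_{E_1}$ has one zero and one nonzero eigenvalue, so the plan is a textbook saddle-node analysis. I would translate $E_1$ to the origin, apply the linear change of variables sending the kernel of $J_{E_1}$ and the eigenspace for $c-d$ to the coordinate axes, and then invoke the center manifold theorem to reduce the flow to a scalar equation of the form $\dot x = A x^2 + O(x^3)$. The coefficient $A$ is an explicit polynomial expression in $c,\beta,b,d$ obtained by substituting the quadratic approximation of the center manifold $y=h(x)=O(x^2)$ into the $x$-equation; provided $A\neq 0$, $E_1$ is a saddle-node. The sign of the transverse eigenvalue $c-d$ then distinguishes whether the unique parabolic sector of the standard saddle-node portrait is stable (when $d<c$) or unstable (when $d>c$), giving the two non-cusp cases.

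For the critical case $d=c$, both eigenvalues vanish and $J_{E_1}$ is nilpotent and nonzero. I would first linearly put $J_{E_1}$ into Jordan form $\bigl(\begin{smallmatrix}0&1\\0&0\end{smallmatrix}\bigr)$, then Taylor-expand the nonlinearity up to order three (and to order four if a subsequent degeneracy demands it) and perform successive near-identity polynomial changes of coordinates along the standard Bogdanov--Takens normalisation to reduce to the cusp normal form
$$
\dot x = y,\qquad \dot y = a_2 x^2 + b_2 xy + a_3 x^3 + b_3 x^2 y + \cdots.
$$
To identify $E_1$ as a cusp of codimension exactly three, it then suffices to check the nondegeneracy conditions $a_2=0$, $a_3\ne 0$, and $b_2\ne 0$. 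The key fact that needs to be established by direct substitution is that $d=c$ together with $b=c^2\beta^2/4$ forces $a_2$ to vanish identically while leaving $a_3$ and $b_2$ as nonvanishing polynomials in $c,\beta$; this open nondegeneracy in $\beta$ is precisely what rules out any accidental codimension $\ge 4$ degeneracy.

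The main obstacle is the case $d=c$ computation: carrying three successive polynomial substitutions accurately through degree three (or four) is unavoidably tedious, and a single arithmetic slip will either spuriously annihilate $a_3$ or resurrect a nonzero $a_2$. Cases $d>c$ and $d<c$ are comparatively mechanical once the centre-manifold reduction is set up; the only point of care there is to verify the nonvanishing of the quadratic coefficient $A$ so that a genuine saddle-node (and not a higher-order degeneracy lurking on the line $d=c$) is obtained throughout $d\ne c$.
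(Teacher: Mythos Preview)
Your plan for $d\ne c$ is essentially the paper's: diagonalise the linear part, reduce to the centre direction, check that the leading quadratic coefficient is nonzero, and read off the stability of the parabolic sector from the sign of $c-d$.

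For $d=c$, however, you have targeted the \emph{wrong} codimension-3 nilpotent singularity. The conditions you name --- $a_2=0$, $a_3\ne0$, $b_2\ne0$ --- characterise the degenerate Bogdanov--Takens point of \emph{saddle/focus/elliptic} type (normal form $\dot y=a_3x^3+b_2xy+\cdots$), not a cusp. A codimension-3 \emph{cusp} in the sense of Dumortier--Roussarie--Sotomayor retains $a_2\ne0$; the extra degeneracy is that the $xy$-coefficient $b_2$ vanishes once the system has genuinely been brought to the form $\dot x=y$, and the required nondegeneracy is that the coefficient of $x^3y$ (after using $a_2\ne0$ to normalise away the $x^2y$ term) is nonzero. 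The target normal form is $\dot y=x^2\pm x^3y+O(5)$.

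If you carry out your computation you will find $a_2=-c/\beta\ne0$, so your predicted vanishing fails at once; and after the Jordan-form change the quadratic part of $\dot y$ in fact contains only $x^2$ and $y^2$ terms, so it is $b_2=0$ that actually holds. You must then push the normal-form reduction through degree \emph{four}, not three, to extract the $x^3y$ coefficient and verify it is nonzero. This is exactly what the paper does, arriving at $\dot y=x^2+Ex^3y+O(5)$ with an explicit $E\ne0$ and then invoking the Dumortier--Roussarie--Sotomayor classification of the codimension-3 cusp.
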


\begin{proof}
The Jacobian matrix of system (\ref{system2.1}) at equilibrium $E_{1}$ is
\begin{equation*}
	J_{E_{1} }=\begin{pmatrix}c
	&-\frac{c}{\beta } \\
	d\beta &-d
\end{pmatrix} .
\end{equation*}
It follows that
\begin{equation*}
	tr J_{E_{1} }=c-d, \;\;det J_{E_{1} }=0.
\end{equation*}
Now  translate $E_{1}(u_{1},v_{1})=(\frac{d\beta}{2},\frac{d\beta^{2}}{2})$
into the origin by the translation $(u,v)=(U+u_{1},V+v_{1})$, then
system (\ref{system2.1}) is changed into
\begin{equation}\label{system3.1}
	\begin{cases}	
		\dot U=a_{10}U+a_{01}V+a_{20}U^{2} +a_{11}UV+a_{02}V^{2}+a_{21}U^{2}V\\
	\qquad	+a_{12}UV^{2}+a_{03}V^{3}+a_{22}U^{2}V^{2}+a_{13}UV^{3}+a_{04}V^{4}
+M \left ( U,V \right ), \\
		\dot V=b_{10}U+b_{01}V+b_{20}U^{2}+N \left ( U,V \right ),
	\end{cases}	
\end{equation}
where
\begin{equation*}
\begin{aligned}
a_{10}&=c,&  a_{01}&=-\frac{c}{\beta},&  a_{20}&=\frac{2c}{d\beta},&
a_{11}&=-\frac{4c}{d\beta^{2}},&  a_{02}&=\frac{2c}{d\beta^{3}},\\
a_{21}&=-\frac{4c}{d^{2}\beta^{3}},&
a_{12}&=\frac{8c}{d^{2}\beta^{4}},&  a_{03}&=-\frac{4c}{d^{2}\beta^{5}},&
a_{22}&=\frac{8c}{d^{3}\beta^{5}},& a_{13}&=-\frac{16c}{d^{3}\beta^{6}},\\
a_{04}&=\frac{8c}{d^{3}\beta{7}},&
b_{10}&=d\beta,& b_{01}&=-d,& b_{20}&=1,
\end{aligned}
\end{equation*}
and $M(U,V)$, $N(U,V)$ are terms of at least order five in $U$ and $V$.

First, assume $d=c$. Then both eigenvalues of $J_{E_{1}}$ are zero.
Applying the transformation $(U,V)=(x,\beta(x-\frac{y}{c}))$, we rewrite system (\ref{system3.1}) as
\begin{equation}\label{system3.2}
	\begin{cases}
		\dot{x}  = y+\frac{2y^{2} }{c^{2}\beta}-\frac{4xy^{2}}{c^{3}\beta ^{2} }
		+\frac{4y^{3}}{c^{4}\beta^{2}}+\frac{8x^{2}y^{2}}{c^{4}\beta^{3}}-\frac{16xy^{3}}{c^{5}\beta^{3}}
		+\frac{8y^{4}}{c^{6}\beta^{3}}+M_{2} \left ( x,y \right ),  \\
		\dot{y}= -\frac{c}{\beta }x^{2}+\frac{2y^{2} }{c\beta }-\frac{4xy^{2} }{c^{2}\beta^{2}}
		+\frac{4y^{3}}{c^{3}\beta ^{2}}+\frac{8x^{2}y^{2}}{c^{3}\beta ^{2}}-\frac{16xy^{3} }{c^{4} \beta ^{3} }
		+\frac{8y^{4} }{c^{5}\beta ^{3}  }+N_{2} \left ( x,y \right ),
	\end{cases}
\end{equation}
and $M_{2}(x,y)$, $N_{2}(x,y)$ are terms of at least order five in $x$ and $y$.
Further,  let $(x,y)=(x_{1},y_{1}+x_{1}^{2}+\frac{2}{c\beta } x_{1} y_{1} -\frac{2}{c^{2} \beta } y_{1}^{2} )$, then
(\ref{system3.2}) is transformed  into the following form
\begin{equation}\label{system3.3}
	\begin{cases}
		\dot x_{1}=y_{1}+x_{1}^{2}+c_{11}x_{1}y_{1}+c_{21}x_{1}^{2}y_{1}+c_{12}x_{1}y_{1}^{2}+c_{03}y_{1}^{3}\\
		\qquad+c_{40}x_{1}^{4}+c_{22}x_{1}^{2}y_{1}^{2}+c_{13}x_{1}y_{1}^{3}+c_{04}y_{1}^{4}+M_{3}(x_{1},y_{1}),         \\
		\dot y_{1}=d_{20}x_{1}^2+d_{11}x_{1}y_{1}+d_{30}x_{1}^{3}
		+d_{21}x_{1}^{2}y_{1}+d_{12}x_{1}y_{1}^{2}+d_{03}y_{1}^{3}\\
		\qquad+d_{40}x_{1}^{4}+d_{31}x_{1}^{3}y_{1}+d_{22}x_{1}^{2}y_{1}^{2}
		+d_{13}x_{1}y_{1}^{3}+d_{04}y_{1}^{4}+N_{3}(x_{1},y_{1}),
	\end{cases}
\end{equation}
where
\begin{equation*}
\begin{aligned}
c_{11}&=\frac{2}{c\beta},\quad c_{21}=\frac{4}{c^{2}\beta},\quad c_{12}=\frac{4}{c^{3}\beta^{2}},
\quad c_{03}=-\frac{4}{c^{4}\beta^{2}},\quad c_{40}=\frac{2}{c^{2}\beta}, \quad c_{22}=\frac{4}{c^{4}\beta^{2}},\\
c_{13}&=\frac{8}{c^{5}\beta^{3}},\quad c_{04}=-\frac{8}{c^{6}\beta^{3}},\quad d_{20}=-\frac{c}{\beta},\quad d_{11}=-2, \quad
d_{30}=-2+\frac{2}{\beta^{2}},\\
d_{21}&=-\frac{4}{c\beta^{2}}+\frac{2}{c\beta},\quad
d_{12}=-\frac{8}{c^{2}\beta},\qquad d_{03}=-\frac{4}{c^{3}\beta^{2}}, \quad
d_{40}=-\frac{6}{c\beta}-\frac{4}{c\beta^{3}},\\
d_{31}&=\frac{4}{c^{2}\beta^{2}}+\frac{16}{c^{2}\beta^{3}}-\frac{16}{c^{2}\beta},\quad
d_{22}=\frac{12}{c^{3}\beta^{2}}-\frac{16}{c^{3}\beta^{3}}, \quad
d_{13}=\frac{8}{c^{4}\beta^{3}}-\frac{24}{c^{4}\beta^{2}},\quad d_{04}=-\frac{16}{c^{5}\beta^{3}},
\end{aligned}
\end{equation*}
and $M_{3}(x_{1},y_{1})$, $N_{3}(x_{1},y_{1})$ are terms of at least order five in $x_{1}$ and $y_{1}$.

Let $(x_{2},y_{2})=(x_{1},y_{1}+x_{1}^{2}+\frac{2}{c\beta } x_{1} y_{1}+M_{4}(x_{2},y_{2}))$, then (\ref{system3.3})
takes  the following form
\begin{equation}\label{system3.4}
	\begin{cases}
		\dot x_{2}=y_{2}, \\
		\dot y_{2}=e_{20}x_{2}^{2}+e_{02}y_{2}^{2}+e_{21}x_{2}^{2}y_{2}+e_{12}x_{2}y_{2}^{2}+e_{03}y_{2}^{3}\\
		\qquad+e_{40}x_{2}^{4}+e_{31}x_{2}^{3}y_{2}+e_{22}x_{2}^{2}y_{2}^{2}
		+e_{13}x_{2}y_{2}^{3}+e_{04}y_{2}^{4}+N_{4}(x_{2},y_{2}),
	\end{cases}
\end{equation}
where
\begin{equation*}
\begin{aligned}
e_{20}&=-\frac{c}{\beta},\quad e_{02}=\frac{2}{c\beta},\quad e_{21}=-\frac{4}{c\beta^{2}},\quad
e_{12}=-\frac{4}{c^{2}\beta^{2}}-\frac{8}{c^{2}\beta},\\
 e_{03}&=-\frac{4}{c^{3}\beta^{2}}, \quad
e_{40}=\frac{4}{c\beta^{3}},\quad
e_{31}=\frac{8}{c^{2}\beta^{2}}+\frac{16}{c^{3}\beta^{3}}+\frac{16}{c^{2}\beta^{3}},\quad
e_{22}=-\frac{8}{c^{3}\beta^{3}}+\frac{40}{c^{3}\beta^{2}}-\frac{16}{c^{4}\beta^{4}},\\
e_{13}&=-\frac{24}{c^{4}\beta^{2}}+\frac{24}{c^{4}\beta^{3}},\quad e_{04}=-\frac{16}{c^{5}\beta^{3}},
\end{aligned}
\end{equation*}
and $M_{4}(x_{2},y_{2})$, $N_{4}(x_{2},y_{2})$ are terms of at least order five in $x_{2}$ and $y_{2}$.

To eliminate the $y_{2}-$ term in (\ref{system3.4}), change system (\ref{system3.4})
with the following transformation \cite{SLH}
\begin{equation*}
	\begin{cases}
		x_{3}=x_{2}-\frac{e_{02}}{2}x_{2}^{2}-\frac{e_{21}}{3e_{20}}x_{2}y_{2}
		-\frac{e_{12}-e_{02}^{2}}{6}x_{2}^{3}-\frac{e_{03}e_{20}-e_{02}e_{21}}{2e_{20}}x_{2}^{2}y_{2}\\
		\qquad
		-\frac{9e_{02}^{3}e_{20}-27e_{12}e_{02}e_{20}+18e_{20}e_{22}-32e_{21}^{2}}{216e_{20}}x_{2}^{4}
		-\frac{7e_{02}^{2}e_{21}-12e_{02}e_{03}e_{20}-4e_{12}e_{21}+3e_{13}e_{20}}{18e_{20}}x_{2}^{3}y_{2}\\
		\qquad+\frac{e_{03}e_{21}-e_{04}e_{20}}{2e_{20}}x_{2}^{2}y_{2}^{2},\\
		y_{3}=y_{2}-e_{02}x_{2}y_{2}-\frac{e_{21}}{3e_{20}}y_{2}^{2}-\frac{e_{21}}{3}x_{2}^{3}
		-\frac{e_{12}-e_{02}^{2}}{2}x_{2}^{2}y_{2}-\frac{-2e_{02}e_{21}+3e_{03}e_{20}}{3e_{20}}x_{2}y_{2}^{2}\\
		\qquad
		-\frac{-3e_{02}e_{20}e_{21}+3e_{03}e_{20}^{2}+2e_{21}e_{30}}{6e_{20}}x_{2}^{4}
		-\frac{9e_{02}^{3}e_{20}-27e_{02}e_{12}e_{20}+18e_{20}e_{22}-14e_{21}^{2}}{54e_{20}}x_{2}^{3}y_{2}\\
		\qquad-\frac{4e_{20}^{2}e_{21}-9e_{02}e_{03}e_{20}-2e_{12}e_{21}+3e_{13}e_{20}}{6e_{20}}x_{2}^{2}y_{2}^{2}
		-\frac{-2e_{03}e_{21}+3e_{04}e_{20}}{3e_{20}}x_{2}y_{2}^{3},
	\end{cases}
\end{equation*}
then we get
\begin{equation}\label{system3.5}
	\begin{cases}
		\dot x_{3}=y_{3},\\
		\dot y_{3}=f_{20}x_{3}^{2}+f_{40}x_{3}^{4}+f_{31}x_{3}^{3}y_{3}+N_{5}(x_{3},y_{3}),
	\end{cases}
\end{equation}
where
\begin{align*}
f_{20}=-\frac{c}{\beta}, \;\;f_{40}=\frac{11}{3c\beta^{3}}-\frac{4}{3c\beta^{2}},\;\;
f_{31}=-\frac{4}{c^{2}\beta^{3}}+\frac{16}{c^{3}\beta^{3}}+\frac{8}{c^{2}\beta^{2}},
\end{align*}
and $M_{5}(x_{3},y_{3})$, $N_{5}(x_{3},y_{3})$ are terms of at least order five in $x_{3}$ and $y_{3}$.

Since $f_{20}=\frac{c}{\beta}\ne 0$, by the change of variables
$\left ( x_{4},y_{4} \right )=\left ( -x_{3},-\frac{1}{\sqrt{-f_{20}} }y_{3}  \right ),\tau =\sqrt{-f_{20}}t$,
we could turn system (\ref{system3.5}) into
\begin{equation}\label{system3.6}
	\begin{cases}
		\frac{\mathrm{d} x_{4}}{\mathrm{d} \tau} =y_{4},\\
		\frac{\mathrm{d} y_{4}}{\mathrm{d} \tau}
		= x_{4}^{2}+\left ( \frac{4}{3c^{2}\beta }-\frac{11}{3c^{2}\beta ^{2}}   \right )x_{4}^{3}
		-\frac{f_{31}}{\sqrt{-f_{20}} }x_{4}^{3}y_{4}+N_{6}(x_{4},y_{4}),
	\end{cases}
\end{equation}
where $N_{6}(x_{4},y_{4})$ are terms of at least order five in $x_{4}$ and $y_{4}$.

From the proposition $5.3$ in  \cite{L}, we know that system (\ref{system3.4}) is equivalent to the system
\begin{equation*}
	\begin{cases}
		\frac{\mathrm{d} x_{4}}{\mathrm{d} \tau} =y_{4},\\
		\frac{\mathrm{d} y_{4}}{\mathrm{d} \tau}  =x_{4}^{2}+Ex_{4}^{3}y_{4}+N_{6}(x_{4},y_{4}),
	\end{cases}
\end{equation*}
where $E=-\frac{f_{31}}{\sqrt{-f_{20}} }
=\frac{\frac{4}{c^{2}\beta^{3}}-\frac{16}{c^{3}\beta^{3}}-\frac{8}{c^{2}\beta^{2}}}{\sqrt{-f_{20}}} \ne 0$.
Therefore, $E_{1}$ is a cusp of codimension three. This proves $(a)$.

Next, assume $d \ne c$. The eigenvalues of $J_{E_{1}}$ are $\lambda _{3}=0$ and $\lambda _{4}=c-d$.
Applying the transformation
\begin{equation*}
	U=\frac{u}{\beta }+\frac{v}{\beta d}, V=u+v, \tau=\left ( d-c \right )t,
\end{equation*}
then  system (\ref{system3.1}) becomes
\begin{equation*}
	\begin{cases}
		\frac{\mathrm{d} u}{\mathrm{d} \tau}=-\frac{3}{\beta ^{2}\left ( d-c \right )^{2} }u^{2}
		-\frac{4d+2}{\beta ^{2}\left ( d-c \right )^{2}d }uv
		+\frac{1-4d}{\beta^{2}d^{2}\left ( d-c \right )^{2} } v^{2} +M_{7}(u,v),\\
		\frac{\mathrm{d} v}{\mathrm{d} \tau}=v-\frac{2+d}{\left ( d-c \right )^{2}\beta ^{2} }u^{2}
		-\frac{6}{\beta ^{2}\left ( d-c \right )^{2}}uv
		+\frac{2-5d}{\beta ^{2}d^{2}\left ( d-c \right )^{2} }v^{2} + N_{7}(u,v),
	\end{cases}
\end{equation*}
and $M_{7}(u,v),N_{7}(u,v)$ are terms of at least order three in $u$ and $v$.
The coefficient of $u^{2}$ is $-\frac{3}{\beta ^{2}\left ( d-c \right )^{2} }<0$.
From Theorem 7.1 in \cite{Zhang}, the origin is a saddle-node.
Considering the time vatiable $\tau$, if $d-c<0$, then $E_{1}$ is a saddle-node with a stable parabolic sector;
if $d-c>0$, then $E_{1}$ is a saddle-node with an unstable parabolic sector.
\end{proof}

If $\Delta>0$, then $h(u)$ has two equilibria. Finally, we discuss the stability of the positive equilibria $E_{2}$ and $E_{3}$.

\begin{theorem}
 If $\Delta>0$, then the positive equilibrium $E_{3}$ of system (\ref{system2.1})
is always a saddle point and the positive equilibrium $E_{2}$ is

(a) a source if $d<c$;

(b) a center or a fine focus if $d=c$;

(c) a sink if $d>c$.
\end{theorem}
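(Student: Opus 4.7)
The plan is to linearise at the two positive equilibria, simplify the Jacobian using the relation $v_i = \beta u_i$, and then classify $E_2$ and $E_3$ purely by trace and determinant. A direct computation gives
$$J(u,v) = \begin{pmatrix} c\bigl(\tfrac{2\beta u}{v}-1\bigr) & -\dfrac{c\beta u^{2}}{v^{2}} \\ 2u & -d \end{pmatrix},$$
and substituting $v_i = \beta u_i$ collapses this to
$$J_{E_i} = \begin{pmatrix} c & -c/\beta \\ 2u_i & -d \end{pmatrix}, \qquad \mathrm{tr}\,J_{E_i} = c-d, \qquad \det J_{E_i} = c\Bigl(\tfrac{2u_i}{\beta} - d\Bigr).$$

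Next I would exploit Vieta's formulas for $h(u) = u^{2} - d\beta u + b$: the two positive roots satisfy $u_2 + u_3 = d\beta$ with $u_2 > d\beta/2 > u_3$, whence $2u_2/\beta - d = \sqrt{\Delta}/\beta > 0$ and $2u_3/\beta - d = -\sqrt{\Delta}/\beta < 0$. Consequently $\det J_{E_3} < 0$, which immediately identifies $E_3$ as a hyperbolic saddle, whereas $\det J_{E_2} > 0$, so the type of $E_2$ is dictated by the sign of $\mathrm{tr}\,J_{E_2} = c - d$.

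Finally, I would split into the three cases of the statement. When $d < c$ the trace is positive and, combined with the positive determinant, both eigenvalues have positive real part, so $E_2$ is a source; when $d > c$ the trace is negative, so $E_2$ is a sink; when $d = c$ the eigenvalues reduce to the purely imaginary pair $\pm i\sqrt{c\sqrt{\Delta}/\beta}$, the linearisation is a center, and the full nonlinear system therefore has either a genuine center or a fine focus at $E_2$.

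There is essentially no obstacle: the entire argument reduces to linear algebra once the identity $v_i = \beta u_i$ is used to simplify the Jacobian. The only point left undecided is the center-versus-fine-focus dichotomy in case (b), but since the statement deliberately keeps both possibilities open no focal-value or normal-form computation is required here; that finer determination is the natural starting point for the Hopf analysis carried out in the subsequent section.
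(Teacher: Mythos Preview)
Your proposal is correct and follows essentially the same route as the paper: compute the Jacobian at $E_2$ and $E_3$ using $v_i=\beta u_i$, observe that $\det J_{E_3}=-c\sqrt{\Delta}/\beta<0$ (saddle) while $\det J_{E_2}=c\sqrt{\Delta}/\beta>0$, and then classify $E_2$ by the sign of $\mathrm{tr}\,J_{E_2}=c-d$. The only cosmetic difference is that the paper substitutes $2u_i=d\beta\pm\sqrt{\Delta}$ directly rather than invoking Vieta's formulas.
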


\begin{proof}
The Jacobian matrix of system (\ref{system2.1}) at equilibrium $E_{2}$ and $E_{3}$ are
\begin{equation*}
	J_{E_{2}}=\begin{pmatrix}
		c& -\frac{c}{\beta } \\
		d\beta +\sqrt{\Delta }   &-d
	\end{pmatrix} ,\qquad
\ J_{E_{3}}=\begin{pmatrix}
	c& -\frac{c}{\beta } \\
	d\beta -\sqrt{\Delta }   &-d
\end{pmatrix}.
\end{equation*}
Then we could have
\begin{equation*}
	det J_{E_{2}}=\frac{c\sqrt{d^{2}\beta ^{2}-4b} }{\beta }>0
\end{equation*}
and
\begin{equation*}
	det J_{E_{3}}=-\frac{c\sqrt{d^{2}\beta ^{2}-4b} }{\beta }<0.
\end{equation*}
So $E_{3}$ is always a saddle point.
The positive equilibrium $E_{2}$ is determined by the sign of the trace $tr J_{E_{2}}$.
Specifically, when $tr J_{E_{2}}>0$, i.e., $d<c$,  $E_{2}$ is a source;
When $tr J_{E_{2}}<0$, i.e., $d>c$,  $E_{2}$ is a sink. when $tr J_{E_{2}}=0$,i.e., $d=c$, it is a center or a fine focus.
\end{proof}

\section{Bifurcation}\label{section-4}

\subsection{Saddle-node bifurcation}\label{section-4.1}

From Theorem 1 we note  that the equilibrium points of system (\ref{system2.1}) vary as the parameter $b$ changes.
When $b>\frac{d^{2} \beta ^{2}}{4}$, there is no positive equilibrium point. When $b=\frac{d^{2} \beta ^{2}}{4}$, there is a positive
equilibrium. When $b<\frac{d^{2} \beta ^{2}}{4}$, there are two positive equilibria. This indicates the saddle-node bifurcation may occur
around $E_1$.

\begin{theorem}
When $b=b_{SN}$, the system (\ref{system2.1}) undergoes the saddle-node bifurcation around $E_1$, with the
 threshold value $b_{SN}=\frac{d^{2} \beta ^{2}}{4}$.
\end{theorem}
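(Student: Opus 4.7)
My plan is to apply Sotomayor's theorem for the saddle-node bifurcation with $b$ as the bifurcation parameter, the remaining parameters held fixed, and implicitly $d \neq c$ (the degenerate case $d = c$ has already been treated as a codimension-three cusp in Theorem 2). By Theorem 2, at $b = b_{SN}$ the Jacobian $J_{E_1}$ has eigenvalues $0$ and $c - d \neq 0$, so the zero eigenvalue is simple and the spectral hypothesis of Sotomayor's theorem is met.

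The next step is to produce a right null vector $\eta$ of $J_{E_1}$ and a left null vector $\zeta$ of $J_{E_1}^{T}$, and then check the two transversality conditions. A short computation gives (up to scalar) $\eta = (1, \beta)^{T}$ and $\zeta = (d\beta, -c)^{T}$. Writing $F = (f, g)^{T}$ for the right-hand side of (\ref{system2.1}), the parameter-transversality condition $\zeta^{T} F_{b}(E_{1}, b_{SN}) \neq 0$ is immediate because $F_{b} = (0, 1)^{T}$, giving $\zeta^{T} F_{b} = -c \neq 0$. For the non-degeneracy condition $\zeta^{T} [D^{2} F(E_{1}, b_{SN})(\eta, \eta)] \neq 0$, note that $g_{uu} = 2$ and $g_{uv} = g_{vv} = 0$, so the $g$-component of $D^{2} F(\eta, \eta)$ equals $2$, and consequently $\zeta^{T} D^{2} F(\eta, \eta)$ picks up $-2c \neq 0$.

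The subtle point I anticipate is that the $f$-component of $D^{2} F(\eta, \eta)$, namely $f_{uu} + 2\beta f_{uv} + \beta^{2} f_{vv}$ evaluated at $E_{1}$, collapses to zero. Conceptually, this is because $\eta$ is tangent at $E_{1}$ to the branch $v = \beta u$ of the nullcline $\{f = 0\}$, which happens to be a straight line; hence $f$ vanishes identically on the line $E_{1} + t\eta$ and every directional derivative of $f$ in the direction $\eta$ is zero. A naive reading of this cancellation could be mistaken for a failure of non-degeneracy, but the quadratic $u^{2}$ term in $g$ rescues the second component, so Sotomayor's second condition still holds. Once this is observed, Sotomayor's theorem yields a generic saddle-node bifurcation at $E_{1}$ as $b$ crosses $b_{SN}$, completing the proof.
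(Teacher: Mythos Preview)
Your proof is correct and follows essentially the same approach as the paper: both apply Sotomayor's theorem with $b$ as the bifurcation parameter, compute the same right null vector $(1,\beta)^{T}$ and (up to the harmless scalar $d\beta$) the same left null vector, and verify the two transversality conditions via $F_{b}=(0,1)^{T}$ and $D^{2}F(E_{1},b_{SN})(\eta,\eta)=(0,2)^{T}$. Your explicit remark that $d\neq c$ is needed for the zero eigenvalue to be simple, and your geometric explanation of why the $f$-component of the second-order term vanishes, are nice clarifications that the paper omits.
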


\begin{proof}
According to the Sotomayor's theorem \cite{Perko}, we need to verify the transversality condition for the occurrence of
saddle-node bifurcation at $b\equiv b_{SN}$. The Jacobian matrix of system (\ref{system2.1}) at equilibrium $E_{1}$ is
\begin{equation*}
	J_{E_{1}}=\begin{pmatrix}
		c& -\frac{c}{\beta} \\
		d\beta &-d
	\end{pmatrix}.
\end{equation*}
Because of $det(J_{E_{1}})=\lambda_{5}\lambda_{6}=0$, $J_{E_{1}}$ has a zero eigenvalue $\lambda_{5}$.
Let $V$ and $W$ represent the eigenvectors of $J_{E_{1}}$ and $J_{E_{1}}^{T}$
with respect to the eigenvalue $\lambda_{5}$, respectively.

Simple calculation gives
\begin{equation*}
	V=\binom{V_{1}}{V_{2}}=\binom{1}{\beta}
\end{equation*}
and
\begin{equation*}
	W=\binom{W_{1}}{W_{2}}=\binom{1}{-\frac{c}{d\beta}}.
\end{equation*}
Further, we can obtain
\begin{equation*}
	F_{b}(E_{1},b_{SN})=\binom{0}{1},
\end{equation*}

\begin{equation*}
	D^{2}F(E_{1},b_{SN})(V,V)=\begin{pmatrix}
		\frac{\partial^{2} f}{\partial u^{2}}V_{1}^{2}
		+2 \frac{\partial^2 f}{\partial u\partial v} V_{1}V_{2}
		+\frac{\partial^{2} f}{\partial v^{2}}V_{2}^{2} \\
		\frac{\partial^{2} g}{\partial u^{2}}V_{1}^{2}
		+2 \frac{\partial^2 g}{\partial u\partial v} V_{1}V_{2}
		+\frac{\partial^{2} g}{\partial v^{2}}V_{2}^{2}
	\end{pmatrix}_{(E_{1},b_{SN})}=\binom{0}{2} .
\end{equation*}
Obviously, when $b\equiv b_{SN}$, $V$ and $W$ satisfy the transversality conditions
\begin{equation*}
	W^{T}F_{b}(E_{1},b_{SN}) =-\frac{c}{d\beta }\ne 0
\end{equation*}
and
\begin{equation*}
	W^{T}\left [ D^{2} F(E_{1},b_{SN})(V,V)  \right ] =-\frac{2c}{d\beta } \ne 0  .
\end{equation*}
Therefore, when the parameter $b$ goes from one side of $b_{SN}$ to the other, the system (\ref{system2.1}) experiences the saddle-node
bifurcation at the equilibrium point $E_{1}$.
\end{proof}

\subsection{Hopf bifurcation}\label{section-4.2}

From Theorem 3, it is found that the stability of the positive equilibrium of $E_{2}$ changes as the sign of
$tr(E_{2})$ varies, that will probably lead to  the Hopf bifurcation around $E_{2}$.

According to the Hopf bifurcation theorem, we need to verify the transversal condition.
Based on the fact that $\frac{\mathrm{d} tr\left ( J_{E_{2}} \right )}{\mathrm{d} d}\bigg|_{d = c} =-1\ne 0$,
system (\ref{system2.1}) undergoes the Hopf-bifurcation around $E_{2}$.
Furthermore, we need to give the first Lyapunov coefficient and determine the stability of the limit cycle around $E_{2}$.

Now translate $E_{2}(u_{2},v_{2})$ to $(0,0)$  by
$(\hat{u},\hat{v})=(u-u_{2},v-v_{2})$ and get the following form
\begin{equation}\label{system4.1}
	\begin{cases}
		\dot{\hat{u} } =c\hat{u}-\frac{c}{\beta } \hat{v} +\frac{c}{u_{2}}\hat{u}^{2 }
		-\frac{2c}{\beta u_{2}}\hat{u}\hat{v} +\frac{c}{\beta ^{2}u_{2}}\hat{v}^{2}+\hat{M}(\hat{u},\hat{v}),\\
		\dot{\hat{v} } =2u_{2}\hat{u}-\hat{v} +\hat{u}^{2 }+ \hat{N}(\hat{u},\hat{v}),
	\end{cases}
\end{equation}
where $\hat{M}(\hat{u},\hat{v})$ and $ \hat{N}(\hat{u},\hat{v})$ are terms of at least order three in $\hat{u}$ and $\hat{v}$.

From another transformation
$\hat{x}=-\hat{u},\hat{y}=-\hat{u},\hat{y}=\frac{1}{\sqrt{D}}(c\hat{u}-\frac{c}{\beta } \hat{v} )$
and $\mathrm{d}\tau =\sqrt{D} \mathrm{d}t $, where $D=\frac{\sqrt{(d\beta)^{2}-4b} }{\beta }$,
system (\ref{system4.1}) becomes
\begin{equation*}
	\begin{cases}
		\dot{\hat{x}}=-\dot{\hat{y}}+f(\hat{x},\hat{y}  ), \\
		\dot{\hat{y}}=\dot{\hat{x}}+g(\hat{x},\hat{y}  ),
	\end{cases}
\end{equation*}
where
\begin{align*}
f(\hat{x},\hat{y} )=-\frac{\sqrt{D} }{u_{2}}\hat{y}^{2} +\hat{M_{1}}(\hat{u},\hat{v}),\quad
g(\hat{x},\hat{y}  )=-\frac{1 }{D\beta }\hat{x}^{2} +\frac{\hat{y}^{2}}{u_{2}}+\hat{N_{1}}(\hat{u},\hat{v}),
\end{align*}
where $\hat{M_1}(\hat{u},\hat{v})$ and $ \hat{N_1}(\hat{u},\hat{v})$ are also terms of at least order three in $\hat{u}$ and $\hat{v}$.

Using the formal series method described in \cite{Zhang}, we can calculate that the first-order Lyapunov number is
\begin{equation*}
	\sigma =\frac{\sqrt{D} }{4u_{2}^{2}} <0.
\end{equation*}

Then the following theorem is available.

\begin{theorem}
 When $\Delta>0$ and $d=c$, the system (\ref{system2.1}) at the equilibrium experiences the supercritical Hopf bifurcation with a stable limit cycle
 around $E_{2}$.
\end{theorem}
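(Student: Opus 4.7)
The plan is to verify the two standard conditions of the Andronov--Hopf theorem and then pin down the sign of the first focal value. At the critical parameter $d=c$, Theorem~3(b) already tells us that $\operatorname{tr}(J_{E_2})=0$ while $\det(J_{E_2})=c\sqrt{d^2\beta^2-4b}/\beta>0$ under the hypothesis $\Delta>0$, so $J_{E_2}$ carries a pair of purely imaginary eigenvalues $\pm i\sqrt{\det(J_{E_2})}$. This is the eigenvalue condition of the Hopf theorem. The transversality condition follows from the short computation reported just before the statement, namely $\frac{d}{dd}\operatorname{tr}(J_{E_2})\big|_{d=c}=-1\neq 0$, which says the real part of the complex eigenvalue pair crosses the imaginary axis with nonzero speed as $d$ varies across $c$.

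Next I would bring the system into the standard Hopf form to prepare for the focal-value computation. The plan is a two-step change of variables: first the translation $(\hat u,\hat v)=(u-u_2,v-v_2)$ that sends $E_2$ to the origin and puts the quadratic part in explicit polynomial form, and second a linear change of coordinates that diagonalises the linear part into $\bigl(\begin{smallmatrix}0&-\omega\\ \omega&0\end{smallmatrix}\bigr)$ with $\omega=\sqrt{D}$, $D=\sqrt{(d\beta)^2-4b}/\beta$, together with the time rescaling $\tau=\sqrt{D}\,t$. At this stage the right-hand side is precisely the canonical form used in the formal series method of \cite{Zhang}, with explicitly computable coefficients $f(\hat x,\hat y)$ and $g(\hat x,\hat y)$ whose quadratic parts are as displayed in the excerpt.

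The main work is then the computation of the first Lyapunov (focal) number $\sigma$. The plan is to apply the classical Poincaré--Lyapunov formula in the form stated in \cite{Zhang}, keeping track only of the relevant quadratic and cubic derivatives of $f$ and $g$ at the origin, since higher-order terms do not enter $\sigma$. Feeding the explicit quadratic coefficients of $f$ and $g$ into that formula yields a closed expression that simplifies to $\sigma=\sqrt{D}/(4u_2^2)$ (up to the sign convention fixed in \cite{Zhang}, under which this value is read as negative, corresponding to a stable weak focus).

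With $\sigma<0$ and the transversality condition satisfied, the Andronov--Hopf theorem gives the supercritical bifurcation: as $d$ decreases through $c$, $E_2$ loses stability and a unique stable limit cycle emerges in a neighbourhood of $E_2$, proving the theorem. I expect the main obstacle to be purely bookkeeping: correctly tracking the sign conventions (the excerpt already contains a suspicious sign in $\sigma$) and accurately evaluating the Poincaré--Lyapunov formula, since an error in any of the mixed partials of $f,g$ can flip the sign of $\sigma$ and turn a supercritical conclusion into a subcritical one. Everything else is a routine application of theorems already invoked.
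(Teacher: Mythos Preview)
Your proposal is correct and follows essentially the same route as the paper: verify the eigenvalue and transversality conditions, translate $E_2$ to the origin, pass to the canonical Hopf form by a linear change of variables and time rescaling, and then compute the first Lyapunov number via the formal series method of \cite{Zhang} to obtain $\sigma=\sqrt{D}/(4u_2^2)$, interpreted as negative under the sign convention there. You even flag the same oddity in the sign presentation that appears in the paper itself.
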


\subsection{Bogdanov-Takens bifurcation}\label{section-4.3}

When $u =\frac{v}{\beta}$ and $d=c$, it follows from Theorem 2(a) that the unique positive equilibrium $E_{1}$ of system (\ref{system2.1})
is a cusp of codimension three. The Bogdanov-Takens bifurcation may occur arround $E_{1}$.
Now we select $\beta$, $b$ and $d$ as bifurcation parameters, and the Bogdanov-Takens bifurcation may occur under parameter perturbation.

\begin{theorem}
 When $u =\frac{v}{\beta}$ and $d=c$, the parameter $(\beta,b,d)$ varies within the small neighborhood of $(\beta_{BT},b_{BT},d_{BT})$,
where $\beta_{BT},b_{BT},$ and $d_{BT}$ are the Bogdanov-Takens bifurcation threshold values.
Then system (\ref{system2.1}) undergoes the Bogdanov-Taken bifurcation of codimension 3 in the small neighborhood of $E_{1}$.
\end{theorem}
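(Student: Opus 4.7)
The plan is to mimic the chain of transformations used in the proof of Theorem 2(a), but with the parameters $\beta,b,d$ perturbed around the critical values $(\beta_{BT},b_{BT},d_{BT})$ so that the normal form at the end becomes a \emph{universal unfolding} of the codimension-3 cusp rather than merely the cusp itself. The bifurcation point is characterized by the two degeneracy conditions $d^{2}\beta^{2}=4b$ and $d=c$ (so that $b_{BT}=c^{2}\beta_{BT}^{2}/4$ and $d_{BT}=c$), together with the nondegeneracy $E\neq 0$ along this locus that was already verified in Theorem 2(a).

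First I would write $(\beta,b,d)=(\beta_{BT}+\varepsilon_{1},\,b_{BT}+\varepsilon_{2},\,d_{BT}+\varepsilon_{3})$ with $\varepsilon=(\varepsilon_{1},\varepsilon_{2},\varepsilon_{3})$ small, translate the critical equilibrium $E_{1}(0)$ to the origin, and re-expand $f$ and $g$ in powers of $(U,V)$ with coefficients that are now smooth functions of $\varepsilon$. The perturbation will generically restore a nonzero determinant and trace in the linear part, and will introduce nonzero constant/linear terms in $\dot U$ and $\dot V$; these are precisely the ``missing'' terms that are destined to become the three unfolding parameters.

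Next I would replay, in order, the four changes of coordinates used in Theorem 2(a): the linear step $(U,V)=(x,\beta(x-y/c))$, the two near-identity nonlinear steps that kill the relevant quadratic and cubic resonances, and the final rescaling $(x_{4},y_{4},\tau)=(-x_{3},-y_{3}/\sqrt{-f_{20}},\sqrt{-f_{20}}\,t)$. Each transformation must now carry along the $\varepsilon$-dependent low-order terms; the two nonlinear steps will no longer annihilate the targeted resonances exactly but will leave residual coefficients that are smooth in $\varepsilon$ and vanish identically at $\varepsilon=0$. After absorbing higher-order terms in $\varepsilon$ by one more parameter-space reparametrization, the system should take the universal unfolding form
\begin{equation*}
\dot x = y, \qquad \dot y = \mu_{1}(\varepsilon)+\mu_{2}(\varepsilon)\,y+\mu_{3}(\varepsilon)\,x y+x^{2}+s\,x^{3}y+O(|(x,y)|^{5}),
\end{equation*}
with $s=\mathrm{sgn}(E)\in\{+1,-1\}$, which is the standard normal form of the cusp of codimension three (cf.\ Proposition 5.3 of \cite{L}).

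Finally, the transversality step is to show that the map $\varepsilon\mapsto(\mu_{1}(\varepsilon),\mu_{2}(\varepsilon),\mu_{3}(\varepsilon))$ is a local diffeomorphism at the origin, i.e.
\begin{equation*}
\det\,\frac{\partial(\mu_{1},\mu_{2},\mu_{3})}{\partial(\varepsilon_{1},\varepsilon_{2},\varepsilon_{3})}\bigg|_{\varepsilon=0}\neq 0,
\end{equation*}
after which the standard unfolding theorem delivers the codimension-3 Bogdanov--Takens bifurcation, together with its attendant saddle-node, Hopf, homoclinic and double-limit-cycle loci in the $(\varepsilon_{1},\varepsilon_{2},\varepsilon_{3})$-space. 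The main obstacle is entirely computational: propagating the $\varepsilon$-dependence through five successive nonlinear transformations and extracting the leading-order expressions for $\mu_{1},\mu_{2},\mu_{3}$ is extremely tedious, and the final $3\times 3$ Jacobian must be evaluated with care so that no hidden cancellation among the contributions of $\varepsilon_{1},\varepsilon_{2},\varepsilon_{3}$ spuriously forces the determinant to vanish. A minor subtlety is that $E_{1}$ itself splits under perturbation of $b$, so one must decide whether to center coordinates on the unperturbed $E_{1}(0)$ or on its parameter-dependent continuation; the former is algebraically cleaner and is the choice I would make.
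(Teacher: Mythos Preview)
Your proposal is correct and follows the same overall strategy as the paper: perturb $(\beta,b,d)$, translate $E_{1}$ to the origin, reduce by successive near-identity changes to the universal unfolding of the codimension-3 cusp, and then verify that $\det\partial(\mu_{1},\mu_{2},\mu_{3})/\partial(\varepsilon_{1},\varepsilon_{2},\varepsilon_{3})|_{\varepsilon=0}\neq 0$. The one technical difference is that the paper does \emph{not} replay the transformations of Theorem~2(a); instead it first passes to $(x_{2},y_{2})=(x_{1},\dot x_{1})$ so that $\dot x_{2}=y_{2}$ exactly, and then follows the systematic elimination procedure of \cite{LCZ}, removing in separate near-identity steps the $y^{2}$, $xy^{2}$, $x^{3}$ and $x^{4}$, and $x^{2}y$ terms before the final rescaling and shift. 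Your plan of propagating $\varepsilon$ through the Theorem~2(a) transformations is perfectly valid, but since those maps were tuned for the $\varepsilon=0$ cusp they will leave more cross-terms to track; the paper's decoupled elimination steps keep the bookkeeping cleaner and make the final Jacobian (computed symbolically in MAPLE/MATLAB) easier to evaluate. Your remark about centering on the unperturbed $E_{1}$ matches the paper's choice.
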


\begin{proof}
When $u =\frac{v}{\beta}$ and $d=c$, it follows theorem 2(a) that $E_{1}$ is a cusp of codimension three of system (\ref{system2.1}).
Perturb the parameters $\beta$, $b$ and $d$  at $\beta_{BT}$, $b_{BT}$ and $d_{BT}$ and denote
$(\beta,b,d)=(\beta_{BT}+\epsilon _{1},b_{BT}+\epsilon _{2}, d_{BT}+\epsilon _{3})$,
where $\epsilon=(\epsilon_{1},\epsilon_{2},\epsilon_{3})$ is a vector of parameters in the small neighborhood of $(0,0,0)$.
Then, the system (\ref{system2.1}) becomes
\begin{equation}\label{system4.2}
	\begin{cases}
		\frac{\mathrm{d} u}{\mathrm{d} t}=c(\frac{(\beta+\epsilon _{1})u^{2}}{v}-u ) ,\\
		\frac{\mathrm{d} v}{\mathrm{d} t}=b+\epsilon _{2}+u^{2}-(d+\epsilon _{3})v.
	\end{cases}
\end{equation}
Then we translate $E_{1}(u_{1},v_{1})=(\frac{c\beta}{2},\frac{c\beta^{2}}{2})$ into the origin by
$(x,y)=(u-u_{1},v-v_{1})$. The system (\ref{system4.2}) is changed into
\begin{equation}\label{system4.3}
	\begin{cases}
		\dot x= \bar{a}_{00} +\bar{a}_{10}x+\bar{a}_{01}y+\bar{a}_{20}x^{2}+\bar{a}_{11}xy
		+\bar{a}_{02}y^{2}+\bar{a}_{21}x^{2}y+\bar{a}_{12}xy^{2}  \\
		\qquad+\bar{a}_{03}y^{3}+\bar{a}_{22}x^{2}y^{2}+\bar{a}_{13}xy^{3}
		+\bar{a}_{04}y^{4}+O(|x,y|^{4}), \\
		\dot y= \bar{b}_{00}+ \bar{b}_{10}x+ \bar{b}_{01}y+ \bar{b}_{20}x^{2}+ O(|x,y|^{5}),
	\end{cases}
\end{equation}
where
\begin{align*}
\bar{a}_{00}&=\frac{c^{2}\epsilon_{1}}{2},\quad
\bar{a}_{10}=c(\frac{2(\beta+\epsilon_{1})}{\beta}-1),\quad
\bar{a}_{01}=-\frac{c(\beta+\epsilon_{1})}{\beta^{2}},\quad
\bar{a}_{20}=\frac{2(\beta+\epsilon_{1})}{\beta^{2}},\\
\bar{a}_{11}&=-\frac{4(\beta+\epsilon_{1})}{\beta^{3}},\quad
\bar{a}_{02}=\frac{2(\beta+\epsilon_{1})}{\beta^{4}},\quad
\bar{a}_{21}=-\frac{4(\beta+\epsilon_{1})}{c\beta^{4}},\quad
\bar{a}_{12}=\frac{8(\beta+\epsilon_{1})}{c\beta^{5}},\\
\bar{a}_{03}&=-\frac{4(\beta+\epsilon_{1})}{c\beta^{6}},\quad
\bar{a}_{22}=\frac{8(\beta+\epsilon_{1})}{c^{2}\beta^{6}},\quad
\bar{a}_{13}=-\frac{16(\beta+\epsilon_{1})}{c^{2}\beta^{7}},\quad
\bar{a}_{04}=\frac{8(\beta+\epsilon_{1})}{c^{2}\beta^{8}},\\
\bar{b}_{00}&=b+\epsilon_{2}-\frac{1}{4}c^{2}\beta^{2}-\frac{c\beta^{2}\epsilon_{3}}{2},\quad
\bar{b}_{10}=c\beta,\quad
\bar{b}_{01}=-d-\epsilon_{3},\qquad \bar{b}_{20}=1.
\end{align*}

Then we rewrite system (\ref{system4.3}) with the transformation
$(x,y)=(x_{1}-\frac{2}{c\beta^{2}}x_{1}y_{1},y_{1})$ to
\begin{equation*}
	\begin{cases}
		\dot x_{1} =\bar{c}_{00}+\bar{c}_{10}x_{1}+\bar{c}_{01}y_{1}
		+\bar{c}_{20}x_{1}^{2}+\bar{c}_{11}x_{1}y_{1}+ O(|x,y|^{4}),
		\\
		\dot y_{1} =\bar{d}_{00}+\bar{d}_{10}x_{1}+\bar{d}_{01}y_{1}
		+\bar{d}_{20}x_{1}^{2}+\bar{d}_{11}x_{1}y_{1}+ O(|x,y|^{4}) ,
	\end{cases}
\end{equation*}
where
\begin{align*}
\bar{c}_{00}&=\frac{c^{2}\epsilon_{1}}{2}\quad
\bar{c}_{10}=\frac{2}{c\beta^{2}}\bar{b}_{00}+c(\frac{2(\beta+\epsilon_{1})}{\beta}-1),\quad
\bar{c}_{01}=-\frac{c}{\beta},\\
\bar{c}_{20}&=\frac{4}{\beta}+\frac{2\epsilon_{1}}{\beta^{2}},\quad
\bar{c}_{11}=-\frac{2(c+\epsilon_{3})}{c\beta^{2}}-\frac{4(\beta+\epsilon_{1})}{\beta^{3}},\quad
\bar{d}_{00}=\bar{b}_{00},\\ \bar{d}_{10}&=c\beta,\quad
\bar{d}_{01}=-c-\epsilon_{3},\qquad\bar{d}_{20}=1,\quad
\bar{d}_{11}=-\frac{2}{\beta}.
\end{align*}

Further, we execute the transformation $(x_{2},y_{2})=(x_{1},\dot{x}_{1})$ and get
\begin{equation}\label{system4.4}
	\begin{cases}
		\dot x_{2} = y_{2},
		\\
		\dot y_{2} =\bar{e}_{00}+\bar{e}_{10}x_{2}+\bar{e}_{01}y_{2}+\bar{e}_{20}x_{2}^{2}
		+\bar{e}_{11}x_{2}y_{2}+ \bar{e}_{02}y_{2}^{2}+\bar{e}_{30}x_{2}^{3}
		+\bar{e}_{21}x_{2}^{2}y_{2}+\bar{e}_{12}x_{2}y_{2}^{2}\\
		\qquad+\bar{e}_{03}y_{2}^{3}+\bar{e}_{40}x_{2}^{4}+\bar{e}_{31}x_{2}^{3}y_{2}
		+\bar{e}_{22}x_{2}^{2}y_{2}^{2}+\bar{e}_{13}x_{2}y_{2}^{3}+\bar{e}_{04}y_{2}^{4}+ O(|x,y|^{4}),
	\end{cases}
\end{equation}
where
\begin{align*}
\bar{e}_{00}&=\bar{c}_{01}\bar{d}_{00}-\bar{c}_{00}\bar{d}_{01},\quad
\bar{e}_{10}=\bar{c}_{11}\bar{d}_{00}+\bar{c}_{01}\bar{d}_{10}-\bar{c}_{00}\bar{d}_{11}
-\bar{c}_{10}\bar{d}_{01},\quad
\bar{e}_{01}=\bar{c}_{10}+\bar{d}_{01}-\frac{\bar{c}_{00}\bar{c}_{11}}{\bar{c}_{01}},\\
\bar{e}_{20}&=\bar{c}_{11}\bar{d}_{10}+\bar{c}_{01}\bar{d}_{20}-\bar{c}_{10}\bar{d}_{11}-\bar{c}_{02}\bar{d}_{01},
\qquad \bar{e}_{11}=2\bar{c}_{20}-\frac{\bar{c}_{10}\bar{c}_{11}}{\bar{c}_{01}}
+\frac{\bar{c}_{00}\bar{c}_{11}^{2}}{\bar{c}_{01}^{2}}+\bar{d}_{11},\quad
\bar{e}_{02}=\frac{\bar{c}_{11}}{\bar{c}_{01}},\\
\bar{e}_{30}&=\bar{c}_{11}\bar{d}_{20}-\bar{c}_{02}\bar{d}_{11},\quad
\bar{e}_{21}=-\frac{\bar{c}_{11}\bar{c}_{02}}{\bar{c}_{01}}+\frac{\bar{c}_{11}^{2}\bar{c}_{10}}{\bar{c}_{01}^{2}}
-\frac{\bar{c}_{00}\bar{c}_{11}^{3}}{\bar{c}_{01}^{3}},\quad
\bar{e}_{12}=-\frac{\bar{c}_{11}^{2}}{\bar{c}_{01}^{2}},\quad
\bar{e}_{03}=0,\\
\bar{e}_{40}&=\frac{\bar{c}_{00}\bar{c}_{11}^{4}\bar{d}_{01}}{\bar{c}_{01}^{4}},\quad
\bar{e}_{31}=\frac{\bar{c}_{11}^{2}\bar{c}_{02}}{\bar{c}_{01}^{2}}
-\frac{\bar{c}_{10}\bar{c}_{11}^{3}}{\bar{c}_{01}^{3}}+\frac{\bar{c}_{00}\bar{c}_{11}^{4}}{\bar{c}_{01}^{4}},\quad
\bar{e}_{22}=\frac{\bar{c}_{11}^{3}}{\bar{c}_{01}^{3}},\quad
\bar{e}_{13}=0,\quad \bar{e}_{04}=0.
\end{align*}

To verify that the Bogdanov-Takens bifurcation  occurs at equilibrium point $E_{1}$, we need to get the universal unfolding of
system (\ref{system4.1}). So  we need to eliminate  $y_{2}^{2}$, $x^{3}$, $x^{2}y$, $xy^{2}$, $y^{3}$, and $x^{4}$ terms.
Next, we transform system (\ref{system4.4}) by the procedure similar to that in \cite{LCZ}.

(A) In order to eliminate the $y_{2}^{2}$ term, take the transformation
$(x_{2},y_{2})=(u_{1}+\frac{c_{02}}{2}u_{1}^{2},v_{1}+c_{02}u_{1}v_{1})$, then system (\ref{system4.4}) takes the following form
\begin{equation}\label{system4.5}
	\begin{cases}
		\frac{\mathrm{d} u_{1} }{\mathrm{d} t}  = v_{1},
		\\
		\frac{\mathrm{d} v_{1} }{\mathrm{d} t} =\bar{f}_{00}+\bar{f}_{10}u_{1}+\bar{f}_{01}v_{1}+\bar{f}_{20}u_{1}^{2}
		+\bar{f}_{11}u_{1}v_{1}+\bar{f}_{30}u_{1}^{3}+\bar{f}_{21}u_{1}^{2}v_{1}+\bar{f}_{12}u_{1}v_{1}^{2}\\
		\qquad+\bar{f}_{40}u_{1}^{4}+\bar{f}_{31}u_{1}^{3}v_{1}+\bar{f}_{22}u_{1}^{2}v_{1}^{2}+ O (|u_{1},v_{1}|^{5}) ,
	\end{cases}
\end{equation}
where
\begin{align*}
\bar{f}_{00}&=\bar{e}_{00},\quad
\bar{f}_{10}=\bar{e}_{10}-\bar{e}_{00}\bar{e}_{02},\quad
\bar{f}_{01}=\bar{e}_{01},\quad
\bar{f}_{20}=\bar{e}_{20}-\frac{\bar{e}_{10}\bar{e}_{02}}{2}+\bar{e}_{00}\bar{e}_{02}^{2},\\
\bar{f}_{11}&=\bar{e}_{11},\quad
\bar{f}_{30}=\bar{e}_{30}+\frac{\bar{e}_{10}\bar{e}_{02}^{2}}{2}-\bar{e}_{00}\bar{e}_{02}^{3},\quad
\bar{f}_{21}=\bar{e}_{21}+\frac{\bar{e}_{11}\bar{e}_{02}}{2},\\
\bar{f}_{40}&=\frac{\bar{e}_{20}\bar{e}_{02}^{2}}{4}-\frac{\bar{e}_{10}\bar{e}_{02}^{3}}{2}
+\bar{e}_{00}\bar{e}_{02}^{4}+\frac{\bar{e}_{02}\bar{e}_{30}}{2},\quad
\bar{f}_{31}=\bar{e}_{31}+\bar{e}_{02}\bar{e}_{21},\quad
\bar{f}_{22}=\bar{e}_{22}+\frac{3\bar{e}_{02}\bar{e}_{12}}{2}-\bar{e}_{02}^{3}.
\end{align*}

(B) In order to eliminate the $u_{1}v_{1}^{2}$ term, take the transformation
$(u_{1},v_{1})=(u_{2}+\frac{\bar{f}_{12}}{2}u_{2}^{3},v_{2}+\frac{d_{12}}{6}u_{2}^{2}v_{2})$,
then (\ref{system4.5}) is reduced to
\begin{equation}\label{system4.6}
	\begin{cases}
		\frac{\mathrm{d} u_{2} }{\mathrm{d} t}  = v_{2},
		\\
		\frac{\mathrm{d} v_{2} }{\mathrm{d} t}
		=\bar{g}_{00}+\bar{g}_{10}u_{2}+\bar{g}_{01}v_{2}+\bar{g}_{20}u_{2}^{2}
		+\bar{g}_{11}u_{2}v_{2}+\bar{g}_{30}u_{2}^{3}+\bar{g}_{21}u_{2}^{2}v_{2}\\
		\qquad+\bar{g}_{40}u_{2}^{4}+\bar{g}_{31}u_{2}^{3}v_{2}
		+\bar{g}_{22}u_{2}^{2}v_{2}^{2}+ O(|u_{2},v_{2}|^{5})  ,
	\end{cases}
\end{equation}
where
\begin{equation*}
\begin{aligned}
\bar{g}_{00}&=\bar{f}_{00},\quad
\bar{g}_{10}=\bar{f}_{10},\quad
\bar{g}_{01}=\bar{f}_{01},\quad
\bar{g}_{20}=\bar{f}_{20}-\frac{\bar{f}_{00}\bar{f}_{12}}{2}+\frac{\bar{f}_{10}\bar{f}_{12}}{6},\\
\bar{g}_{11}&=\bar{f}_{11},\quad
\bar{g}_{30}=\bar{f}_{30}-\frac{\bar{f}_{10}\bar{f}_{12}}{2}+\frac{\bar{f}_{20}\bar{f}_{12}}{3},\quad
\bar{g}_{21}=\bar{f}_{21}+\frac{\bar{f}_{11}\bar{f}_{12}}{6},\\
\bar{g}_{40}&=\bar{f}_{40}-\frac{\bar{d}_{20}\bar{d}_{12}}{6}+\frac{\bar{f}_{00}\bar{f}_{12}^{2}}{4},\quad
\bar{g}_{31}=\bar{f}_{31}+\frac{\bar{f}_{11}\bar{f}_{12}}{6},\quad
\bar{g}_{22}=\bar{f}_{22}.
\end{aligned}
\end{equation*}

(C) Notice that $\bar{g}_{20}=-\frac{2b}{c\beta^{3}}-\frac{9c}{2\beta}+O(\epsilon)\ne0$.
To removing the $u_{2}^{3}$ and $u_{2}^{4}$ terms, we transform system (\ref{system4.6}) with
$(u_{2},v_{2})=(u_{3}-\frac{\bar{g}_{30}}{4\bar{g}_{20}}u_{3}^{2}+\frac{15\bar{g}_{30}^{2}-16\bar{g}_{20}\bar{g}_{40}}
{80\bar{g}_{20}^{2}}u_{3}^{3},v_{3})$
and scaling transformation $\mathrm{d}\tau=(1+\frac{\bar{g}_{30}}{2\bar{g}_{20}}u_{3}
+\frac{48\bar{g}_{20}\bar{g}_{40}-25\bar{g}_{30}^{2}}{80\bar{g}_{20}}u_{3}^{2}
+\frac{48\bar{g}_{20}\bar{g}_{30}\bar{g}_{40}-35\bar{g}_{30}^{3}}{80\bar{g}_{20}^{3}}u_{3}^{3})\mathrm{d}t$ to obtain the following system
\begin{equation}
	\begin{cases}
		\frac{\mathrm{d} u_{3}}{\mathrm{d} \tau} =v_{3},\\
		\frac{\mathrm{d} v_{3}}{\mathrm{d} \tau }
		=\bar{i}_{00}+\bar{i}_{10}u_{3}+\bar{i}_{01}v_{3}+\bar{i}_{20}u_{3}^{2}
		+\bar{i}_{11}u_{3}v_{2}+\bar{i}_{21}u_{3}^{2}v_{3}+\bar{i}_{12}u_{3}v_{3}^{2}\\
		\qquad+\bar{i}_{40}u_{3}^{4}+\bar{i}_{31}u_{3}^{3}v_{3}+\bar{i}_{22}u_{3}^{2}v_{3}^{2}+ R(u_{3},v_{3},\epsilon) ,
	\end{cases}
\end{equation}
where
\begin{align*}
\bar{i}_{00}&=\bar{g}_{00},\quad
\bar{i}_{10}=\bar{g}_{10}-\frac{\bar{g}_{00}\bar{g}_{30}}{2\bar{g}_{20}},\quad
\bar{i}_{01}=\bar{g}_{01},\\
\bar{i}_{20}&=\bar{g}_{20}+\frac{45\bar{g}_{00}\bar{g}_{30}^{2}-60\bar{g}_{10}\bar{g}_{20}
               \bar{g}_{30}-48\bar{g}_{00}\bar{g}_{20}\bar{g}_{40}}{80\bar{g}_{20}^{2}},\\
\bar{i}_{11}&=\bar{g}_{11}-\frac{\bar{g}_{01}\bar{g}_{30}}{2\bar{g}_{20}},\quad
\bar{i}_{30}=\frac{\bar{g}_{10}(35\bar{g}_{30}^{2}-32\bar{g}_{20}\bar{g}_{40})}{4\bar{g}_{20}^{2}},\\
\bar{i}_{21}&=\bar{g}_{21}-\frac{60\bar{g}_{11}\bar{g}_{20}\bar{g}_{30}
             -45\bar{g}_{01}\bar{g}_{30}^{2}+48\bar{g}_{01}\bar{g}_{20}\bar{g}_{40}}{80\bar{g}_{20}^{2}},\\
\bar{i}_{40}&=\frac{g10g30g40}{4g20^2}-\frac{15g10g30^3}{64g20^3},\quad
\bar{i}_{31}=\bar{g}_{31}+\frac{7\bar{g}_{11}\bar{g}_{30}^{2}}{8\bar{g}_{20}^{2}}
-\frac{5\bar{g}_{30}\bar{g}_{21}+4\bar{g}_{11}\bar{g}_{40}}{5\bar{g}_{20}},\\ R(u_{3},v_{3},\epsilon)&=
v_3^2O(|u_3,v_3|^2)+O(|u_3,v_3|^5)+O(\epsilon )(O(v_3^2)+O(|u_3,v_3|^3))+O(\epsilon^2)O(|u_3,v_3|).
\end{align*}

(D) Since
\begin{eqnarray*}
\bar{i}_{20}&=&-\frac{9c}{2\beta}-\frac{2b}{c\beta^{3}}+\frac{2527200c^{3}}{\beta^{4}(\frac{c^{3}\beta}{4}-\frac{cb}{\beta})^{2}}
+\frac{62104320b^{2}}{c\beta^{7}(\frac{c^{3}\beta}{4}-\frac{cb}{\beta})^{2}}\\
&&-\frac{23034240cb}{\beta^{5}(\frac{c^{3}\beta}{4}-\frac{cb}{\beta})^{2}}
-\frac{416102406b^{3}}{c^{3}\beta^{9}(\frac{c^{3}\beta}{4}-\frac{cb}{\beta})^{2}}+O(\epsilon)\ne 0,
\end{eqnarray*}
by the transformation
$$(u_{3},v_{3})=(u_{4},v_{4}+\frac{\bar{i}_{21}}{3\bar{i}_{20}}v_{4}^{2}+\frac{\bar{i}_{21}^{2}}{36\bar{i}_{20}^{2}}v_{4}^{3}),
\mathrm{d}t=(1+\frac{\bar{i}_{21}}{3\bar{i}_{20}}v_{4}+\frac{\bar{i}_{21}^{2}}{36\bar{i}_{20}^{2}}v_{4}^{2})\mathrm{d}\tau,$$
we can obtain  the following form
\begin{equation}\label{system4.8}
	\begin{cases}
		\frac{\mathrm{d} u_{4}}{\mathrm{d} \tau} =v_{4},\\
		\frac{\mathrm{d} v_{4}}{\mathrm{d} \tau }
		=\bar{j}_{00}+\bar{j}_{10}u_{4}+\bar{j}_{01}v_{4}+\bar{j}_{20}u_{4}^{2}+\bar{j}_{11}u_{4}v_{4}
		+\bar{j}_{31}u_{4}^{3}v_{4}+R(u_{4},v_{4},\epsilon),
	\end{cases}
\end{equation}
where
\begin{align*}
\bar{j}_{00}&=\bar{i}_{00}, \quad \bar{j}_{10}=\bar{i}_{10},\quad
\bar{j}_{01}=\bar{i}_{01}-\frac{\bar{i}_{00}\bar{i}_{21}}{\bar{i}_{20}},\\
\bar{j}_{20}&=\bar{i}_{20},\quad
\bar{j}_{11}=\bar{i}_{11}-\frac{\bar{i}_{10}\bar{i}_{21}}{\bar{i}_{20}},\quad
\bar{j}_{31}=\bar{i}_{31}-\frac{\bar{i}_{21}\bar{i}_{30}}{\bar{i}_{20}}.
\end{align*}
Additionally, $R(u_{4},v_{4},\epsilon)$ has the same properties as $R(u_{3},v_{3},\epsilon)$.

(E) We have $\bar{j}_{20}$ and $\bar{j}_{31}$ with the help of MAPLE

\begin{equation*}
	\begin{split}
\bar{j}_{20}&=-\frac{2b}{c\beta^3}-\frac{9c}{2\beta}
	+\frac{2527200c^2}{\beta^3(\frac{\beta c^3}{4}-\frac{cb}{\beta})^2}
	+\frac{62104320b^2}{c\beta^7(\frac{\beta c^3}{4}-\frac{cb}{\beta})^2}
	-\frac{23034240cb}{\beta^5(\frac{\beta c^3}{4}-\frac{cb}{\beta})^2}
	-\frac{41610240b^3}{c^3\beta^9(\frac{\beta c^3}{4}-\frac{cb}{\beta})^2}+O(\epsilon)\\
&\ne0,\\
		\bar{j}_{31}&=-\frac{151200 b^3}{{\beta}^{11} c^6 \left(\frac{2 b {\beta}^3}{c}+\frac{9 {\beta} c}{2}\right)^2}
+\frac{128520 b^2}{{\beta}^9 c^4 \left(\frac{2 b {\beta}^3}{c}+\frac{9 {\beta} c}{2}\right)^2}+\frac{71136 b^2}{5 {\beta}^8 c^5
\left(\frac{2 b}{{\beta}^3 c}+\frac{9 c}{2 {\beta}}\right)}-(\frac{57816 b^3}{5 {\beta}^{10} c^5 \left(\frac{2 b {\beta}^3}{c}
+\frac{9 {\beta} c}{2}\right)^2}\\
		&-\frac{109278 b^2}{5 {\beta}^8 c^3 \left(\frac{2 b {\beta}^3}{c}+\frac{9 {\beta} c}{2}\right)^2}+\frac{36 b}{{\beta}^4 c^3}
+\frac{44361 b}{5 {\beta}^6 c \left(\frac{2 b {\beta}^3}{c}+\frac{9 {\beta} c}{2}\right)^2}-\frac{4131 c}{4 {\beta}^4
\left(\frac{2 b {\beta}^3}{c}+\frac{9 {\beta} c}{2}\right)^2}+\frac{27}{{\beta}^2 c})\\
		&\left(\frac{218976 b^3}{{\beta}^{10} c^4 \left(\frac{2 b {\beta}^3}{c}+\frac{9 {\beta} c}{2}\right)^2}
-\frac{273048 b^2}{{\beta}^8 c^2 \left(\frac{2 b {\beta}^3}{c}+\frac{9 {\beta} c}{2}\right)^2}+\frac{93456 b}{{\beta}^6
\left(\frac{2 b {\beta}^3}{c}+\frac{9 {\beta} c}{2}\right)^2}-\frac{9720 c^2}{{\beta}^4 \left(\frac{2 b {\beta}^3}{c}
+\frac{9 {\beta} c}{2}\right)^2}\right)\\
		& (-\frac{32508 b^3}{5 {\beta}^9 c^3 \left(\frac{{\beta} c^3}{4}-\frac{b c}{{\beta}}\right)^2}
+\frac{48519 b^2}{5 {\beta}^7 c \left(\frac{{\beta} c^3}{4}-\frac{b c}{{\beta}}\right)^2}+\frac{32508 b}
{5 {\beta}^8 \left(\frac{{\beta} c^3}{4}-\frac{b c}{{\beta}}\right)^3}-\frac{35991 b c}{5 {\beta}^5
\left(\frac{{\beta} c^3}{4}-\frac{b c}{{\beta}}\right)^2}\\
		&+\frac{3159 c^3}{8 {\beta}^3 \left(\frac{{\beta} c^3}{4}-\frac{b c}{{\beta}}\right)^2}-\frac{2 b}
{{\beta}^3 c}-\frac{9 c}{2 {\beta}})-\frac{72 b}{{\beta}^5 c^4}-\frac{36288 b}{{\beta}^7 c^2
\left(\frac{2 b {\beta}^3}{c}+\frac{9 {\beta} c}{2}\right)^2}-\frac{13464 b}{5 {\beta}^6 c^3 \left(\frac{2 b}{{\beta}^3 c}
+\frac{9 c}{2 {\beta}}\right)}\\
		&+\frac{3402}{{\beta}^5 \left(\frac{2 b {\beta}^3}{c}+\frac{9 {\beta} c}{2}\right)^2}-\frac{432}{{\beta}^4 c
 \left(\frac{2 b}{{\beta}^3 c}+\frac{9 c}{2 {\beta}}\right)}+\frac{18}{{\beta}^3 c^2}+O(\epsilon)\ne0.
	\end{split}
\end{equation*}
Now, we want to turn $\bar{j}_{20}$ and $\bar{j}_{31}$ into $1$ and
notice that the signs of the coefficients of $u_5^2$ and $u_5^3v_5$ change as the signs of $\bar{j}_{20}$ and $\bar{j}_{31}$.
The details are as follows.

(i) If $\bar{j}_{20}>0,\bar{j}_{31}>0$, then system (\ref{system4.8}) becomes the following form with the transformation
$(u_{4},v_{4})=(\bar{j}_{20}^{\frac{1}{5}}\bar{j}_{31}^{-\frac{2}{5}}u_{5},\bar{j}_{20}^{\frac{4}{5}}
\bar{j}_{31}^{-\frac{3}{5}}v_{5})$ and $\tau=\bar{j}_{20}^{-\frac{3}{5}}\bar{j}_{31}^{\frac{1}{5}}t$.
\begin{equation*}
	\begin{cases}
		\dot{u_{5}}=v_{5},\\
		\dot{v_{5}}=\bar{k}_{00}+\bar{k}_{10}u_{5}+\bar{k}_{01}v_{5}+u_{5}^{2}
		+\bar{k}_{11}u_{5}v_{5}+u_{5}^{3}v_{5}+R(u_{5},v_{5},\epsilon),
	\end{cases}
\end{equation*}
where
\begin{align*}
\bar{k}_{00}=\bar{j}_{00}{j}_{20}^{-\frac{7}{5}}\bar{j}_{31}^{\frac{4}{5}}\bar,\quad
\bar{k}_{10}=\bar{j}_{10}\bar{j}_{20}^{-\frac{6}{5}}\bar{j}_{31}^{\frac{2}{5}},\quad
\bar{k}_{01}=\bar{j}_{01}\bar{j}_{20}^{-\frac{3}{5}}\bar{j}_{31}^{\frac{1}{5}},\quad
\bar{k}_{11}=\bar{j}_{11}\bar{j}_{20}^{-\frac{2}{5}}\bar{j}_{31}^{-\frac{1}{5}}.
\end{align*}
Additionally, $R(u_{5},v_{5},\epsilon)$ has the same properties as $R(u_{3},v_{3},\epsilon)$.

(ii) If $\bar{j}_{20}<0,\bar{j}_{31}>0$ or $\bar{j}_{20}>0,\bar{j}_{31}<0$,
then system (\ref{system4.8}) becomes the following form with the transformation
$(u_{4},v_{4})=(\bar{j}_{20}^{\frac{1}{5}}\bar{j}_{31}^{-\frac{2}{5}}u_{5},-\bar{j}_{20}^{\frac{4}{5}}
\bar{j}_{31}^{-\frac{3}{5}}v_{5})$ and $\tau=-\bar{j}_{20}^{-\frac{3}{5}}\bar{j}_{31}^{\frac{1}{5}}t$.
\begin{equation*}
	\begin{cases}
		\dot{u_{5}}=v_{5},\\
		\dot{v_{5}}=\bar{k}_{00}+\bar{k}_{10}u_{5}+\bar{k}_{01}v_{5}+u_{5}^{2}
		+\bar{k}_{11}u_{5}v_{5}-u_{5}^{3}v_{5}+R(u_{5},v_{5},\epsilon),
	\end{cases}
\end{equation*}
where
\begin{align*}
\bar{k}_{00}=\bar{j}_{00}{j}_{20}^{-\frac{7}{5}}\bar{j}_{31}^{\frac{4}{5}}\bar,\quad
\bar{k}_{10}=\bar{j}_{10}\bar{j}_{20}^{-\frac{6}{5}}\bar{j}_{31}^{\frac{2}{5}},\quad
\bar{k}_{01}=-\bar{j}_{01}\bar{j}_{20}^{-\frac{3}{5}}\bar{j}_{31}^{\frac{1}{5}},\quad
\bar{k}_{11}=-\bar{j}_{11}\bar{j}_{20}^{-\frac{2}{5}}\bar{j}_{31}^{-\frac{1}{5}}.
\end{align*}

(iii) If $\bar{j}_{20}<0,\bar{j}_{31}<0$,
then system (\ref{system4.8}) becomes the following form with the transformation
$(u_{4},v_{4})=(-\bar{j}_{20}^{\frac{1}{5}}\bar{j}_{31}^{-\frac{2}{5}}u_{5},-\bar{j}_{20}^{\frac{4}{5}}
\bar{j}_{31}^{-\frac{3}{5}}v_{5})$ and $\tau=\bar{j}_{20}^{-\frac{3}{5}}\bar{j}_{31}^{\frac{1}{5}}t$.
\begin{equation*}
	\begin{cases}
		\dot{u_{5}}=v_{5},\\
		\dot{v_{5}}=\bar{k}_{00}+\bar{k}_{10}u_{5}+\bar{k}_{01}v_{5}-u_{5}^{2}
		+\bar{k}_{11}u_{5}v_{5}-u_{5}^{3}v_{5}+R(u_{5},v_{5},\epsilon),
	\end{cases}
\end{equation*}
where
\begin{align*}
\bar{k}_{00}=-\bar{j}_{00}{j}_{20}^{-\frac{7}{5}}\bar{j}_{31}^{\frac{4}{5}}\bar,\quad
\bar{k}_{10}=\bar{j}_{10}\bar{j}_{20}^{-\frac{6}{5}}\bar{j}_{31}^{\frac{2}{5}},\quad
\bar{k}_{01}=\bar{j}_{01}\bar{j}_{20}^{-\frac{3}{5}}\bar{j}_{31}^{\frac{1}{5}},\quad
\bar{k}_{11}=-\bar{j}_{11}\bar{j}_{20}^{-\frac{2}{5}}\bar{j}_{31}^{-\frac{1}{5}}.
\end{align*}

(F) Finally, we get the universal unfolding with the transformation
$(u_{6},v_{6})=(u_{5}-\frac{\bar{k}_{10}}{2},v_{5})$
\begin{equation}\label{system4.9}
	\begin{cases}
		\dot{u_{6}}=v_{6},\\
		\dot{v_{6}}=\bar{l}_{1}+\bar{l}_{2}v_{6}+\bar{l}_{3}u_{6}v_{6}
		+\bar{l}_{4}u_{6}^{2}+\bar{l}_{5}u_{6}^{3}v_{6}+R(u_{6},v_{6},\epsilon),
	\end{cases}
\end{equation}
where $R(u_{6},v_{6},\epsilon)$ has the same properties as $R(u_{3},v_{3},\epsilon)$.
There are three results corresponding to the three situations in (E).

(i) If $\bar{j}_{20}>0,\bar{j}_{31}>0$, then the cofficients of system (\ref{system4.9}) are
\begin{align*}
	\bar{l}_1&=\bar{k}_{00}-\frac{1}{4}\bar{k}_{10}^2,&
	\bar{l}_2&=\bar{k}_{01}-\frac{1}{8}\bar{k}_{10}^3-\frac{1}{2}\bar{k}_{10}\bar{k}_{11},&
	\bar{l}_3&=\bar{k}_{11}+\frac{3}{4}\bar{k}_{10}^2,&
	\bar{l}_{4}&=1,& \bar{l}_{5}=1.
\end{align*}

(ii) If $\bar{j}_{20}<0,\bar{j}_{31}>0$ or $\bar{j}_{20}>0,\bar{j}_{31}<0$,
then the cofficients of system (\ref{system4.9}) are
\begin{align*}
	\bar{l}_1&=\bar{k}_{00}-\frac{1}{4}\bar{k}_{10}^2,&
	\bar{l}_2&=\bar{k}_{01}+\frac{1}{8}\bar{k}_{10}^3-\frac{1}{2}\bar{k}_{10}\bar{k}_{11},&
	\bar{l}_3&=\bar{k}_{11}-\frac{3}{4}\bar{k}_{10}^2,&
	\bar{l}_{4}&=1,& \bar{l}_{5}&=-1.
\end{align*}

(iii) If $\bar{j}_{20}<0,\bar{j}_{31}<0$, then the cofficients of system (\ref{system4.9}) are
\begin{align*}
	\bar{l}_1&=\bar{k}_{00}+\frac{1}{4}\bar{k}_{10}^2,&
	\bar{l}_2&=\bar{k}_{01}-\frac{1}{8}\bar{k}_{10}^3+\frac{1}{2}\bar{k}_{10}\bar{k}_{11},&
	\bar{l}_3&=\bar{k}_{11}-\frac{3}{4}\bar{k}_{10}^2,&
	\bar{l}_{4}&=-1,&\bar{l}_{5}&=-1.
\end{align*}

Then with the help of the MATLAB, we can obtain
\begin{equation*}
	\left |
    \frac{\partial (\bar{l}_1,\bar{l}_2,\bar{l}_3 )}{\partial (\epsilon_1,\epsilon_2,\epsilon_3 )}
    \right |
	_{\epsilon_1=\epsilon_2=\epsilon_3=0} \ne0
\end{equation*}
for all three possible situations in (F).
Therefore, according to the theory in \cite{Wiggins,Dumortier}, system (\ref{system2.1}) undergoes the Bogdanov-Takens
bifurcation of codimension $3$ in a small neighborhood of $E_1$.
\end{proof}

The bifurcation diagram for system (\ref{system4.9}) can be described as follows.
If $l_1<0$, there are no equilibria; if $l_1=0$, then there is a saddle-node bifurcation plane in a small neighborhood of the origin
$(0,0)$; if $l_1>0$, then the system has two equilibria, a saddle and an anti-saddle.
The remaining surfaces of the bifurcation diagram in $\mathbb{R}^3$ have a conical structure, emanating from $(l_1,l_2,l_3)=(0,0,0)$,
 which can be demonstrated by drawing its intersection with the half sphere
\begin{equation*}
S=\left \{ (l_1,l_2,l_3)|l_1^2+l_2^2+l_3^2=\rho^2,l_1\ge 0,\rho>0 \ \text{sufficiently small}\right \}.
\end{equation*}
Now we project the traces onto the $l_2l_3$-plane for clear visualization.
\begin{figure}[!h]
  \begin{center}
   \includegraphics[width=0.5\textwidth]{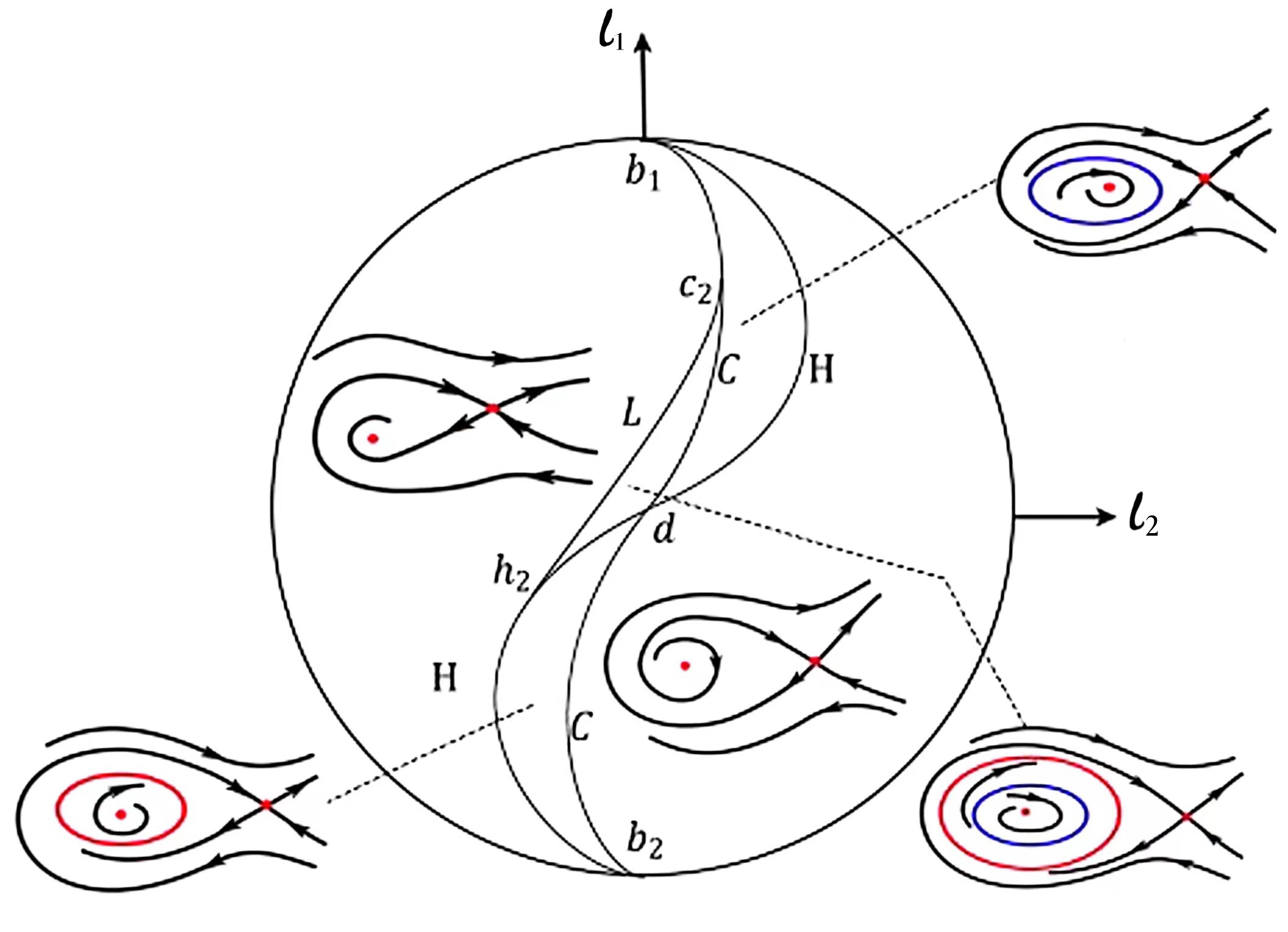}
  \end{center}
  \vskip
-10pt \caption{\small Bifurcation diagram of system (\ref{system4.9}) on $S$.}
\label{Figure5}
\end{figure}

Here we summarize the bifurcation in  system (\ref{system4.9}) based on the above discussion.
Figure 1 contains the Hopf bifurcation curve, the homoclinic bifurcation curve and the saddle-node bifurcation curve, which are represented by $H,C$,
and $\partial S$, respectively, where $\partial S$ is the boundary of $S$.
The curves $H$ and $C$ have the first order contact with the boundary of $S$ at the points $b_1$ and $b_2$.
The curve $L$ is tangent to the curve $C$ at point $c_2$ and tangent to the curve $H$ at point $h_2$, which is the saddle-node bifurcation
curve of double limit cycles.
The system (\ref{system4.9}) is a cusp singularity unfolding of codimension 2 around $b_1$ and $b_2$.

Along the $H$, when crossing the arc $b_1h_2$ of $H$ from right to left, the curve $H$ is a subercritical Hopf bifurcation with an unstable cycle
curve of codimension one. And the curve $H$ is a supercritical Hopf bifurcation with a stable cycle curve of codimension one when crossing the
arc $h_2b_2$ of $H$ from left to right. The Hopf bifurcation of codimension 2 occurs at point $h_2$.

A homoclinic bifurcation of codimension 1 occurs along the curve $C$.
When crossing the arc $b_1c_2$ of $C$ from left to right, the two parts of the saddle point coincide and an unstable limit cycle appears.
And the two parts of the saddle point coincide and a stable limit cycle appears when crossing  the arc $c_2b_2$ of $C$ from right to left.
A homoclinic bifurcation of codimension 2 occurs at point $c_2$.

Then we give some numerical simulations about the system.
In Figure 2, note that $E_0$ is a stable node when $c=0.1,\beta=0.12,b=0.08,d=0.08$. In Figure 3, there is a boundary equilibrium
point $E_0$ and a positive equilibrium point $E_1$. As $d=0.4$, $E_1$ is a cusp when $c=0.4,\beta=0.5477,b=0.012$;
$E_1$ is a saddle-node with the stable parabolic sector when $c=0.3,\beta=0.5,b=0.01$;
$E_1$ is a saddle-node with the unstable parabolic sector when $c=0.45,\beta=0.5,b=0.01$. There is a positive equilibrium $E_2$ in Figure 4.
Choose $\beta=0.6,b=0.0125$, $E_2$ is a center when $c=0.4,d=0.4$; $E_2$ is a source when $c=0.45,d=0.38$; $E_2$ is a sink when $c=0.3,d=0.5$.
$E_3$ is a saddle point when $c=0.3,\beta=0.5,b=0.0075,d=0.4$, which is shown in Figure 5.
\begin{figure}[!h]
  \begin{center}
   \includegraphics[width=0.45\textwidth]{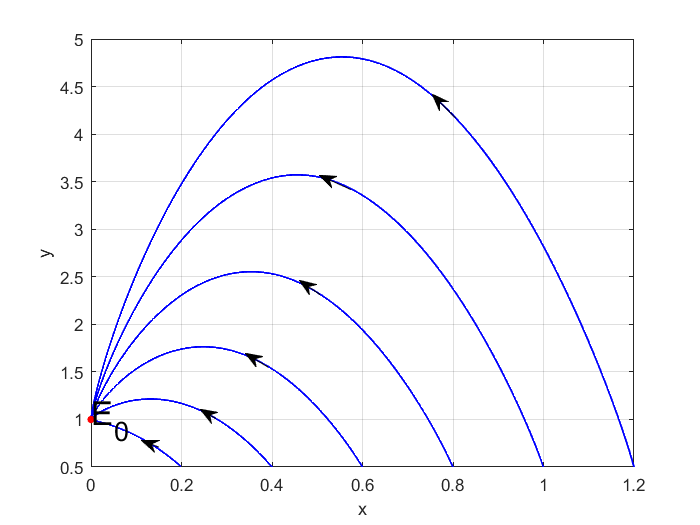}
  \end{center}
  \vskip
-10pt \caption{\small $E_0$ is a stable node when $c=0.1,\beta=0.12,b=0.08,d=0.08$.}
\label{Figure1}
\end{figure}

\begin{figure}[htbp]
    \centering
    \subfigure[]{
        \includegraphics[width=0.45\textwidth]{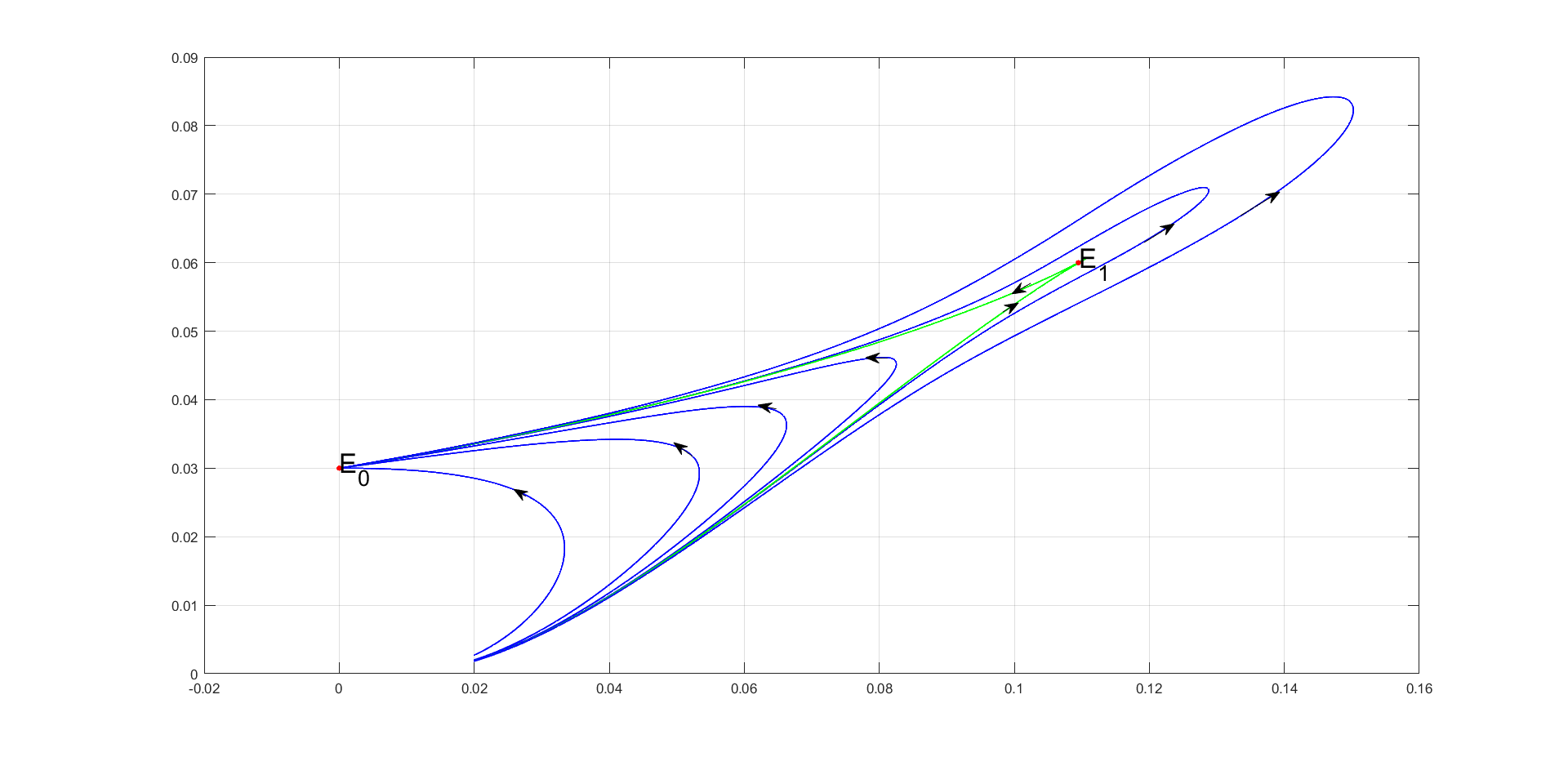}
        \label{fig:subfig1}
    }
    \subfigure[]{
        \includegraphics[width=0.45\textwidth]{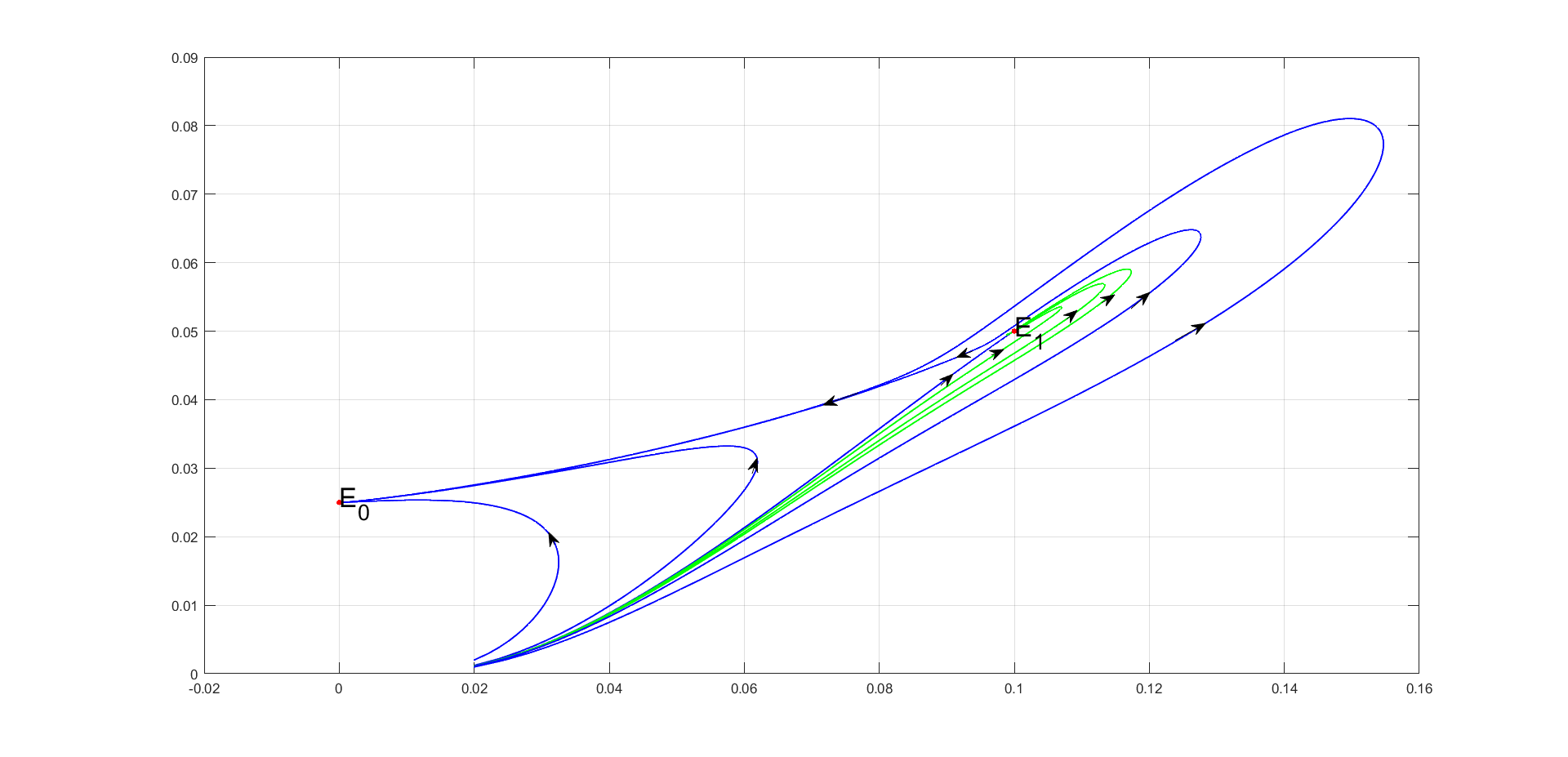}
        \label{fig:subfig2}
    }
    \subfigure[]{
        \includegraphics[width=0.45\textwidth]{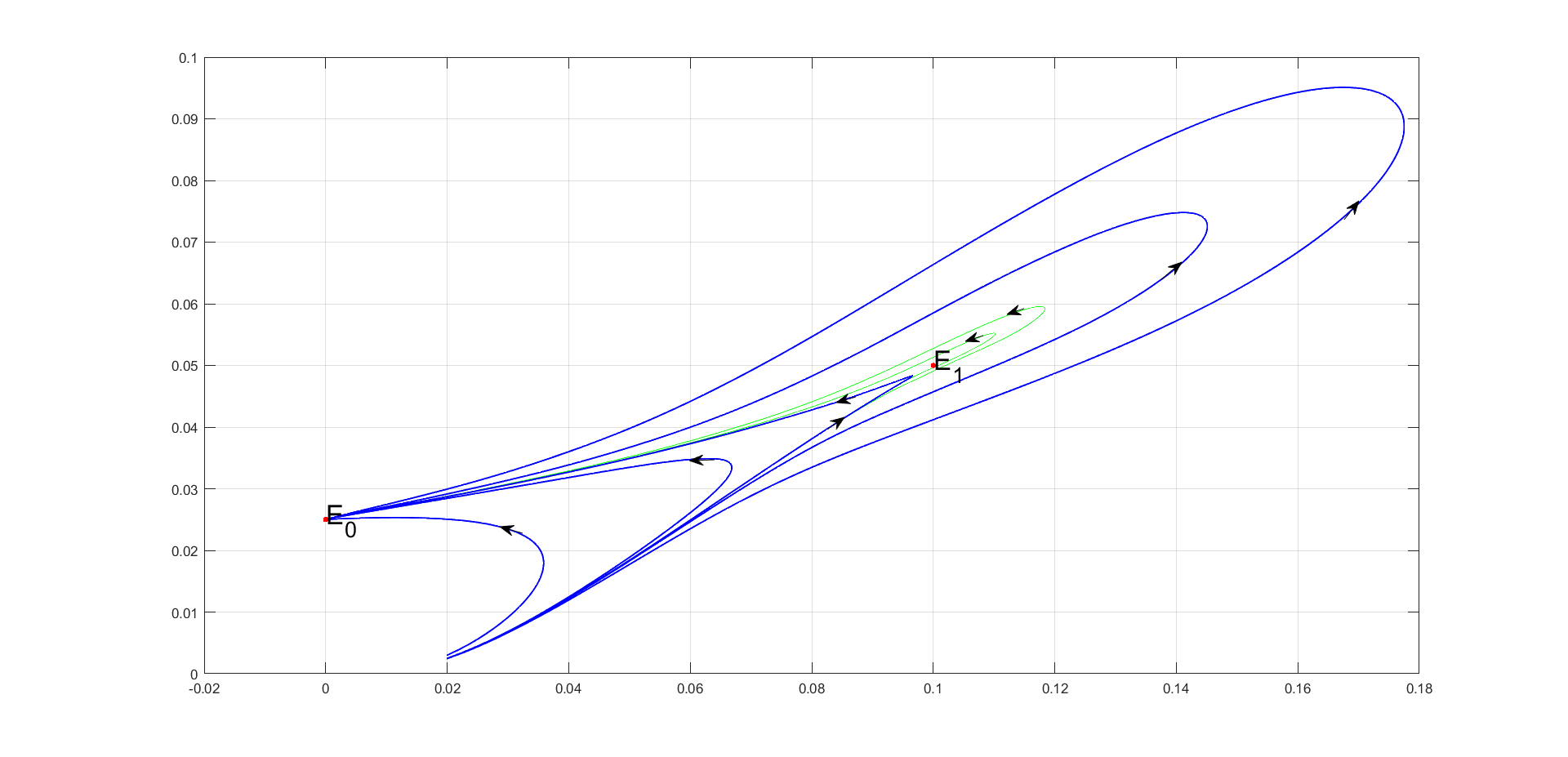}
        \label{fig:subfig3}
    }
    \caption{\small The stability of  $E_1$
    (a) $E_1$ is a cusp when $c=0.4,\beta=0.5477,b=0.012,d=0.4$
    (b) $E_1$ is a saddle-node with stable parabolic sector when $c=0.3,\beta=0.5,b=0.01,d=0.4$.
    (c) $E_1$ is a saddle-node with unstable parabolic sector when $c=0.45,\beta=0.5,b=0.01,d=0.4$.}
    \label{fig:all}
\end{figure}

\begin{figure}[htbp]
    \centering
    \subfigure[]{
        \includegraphics[width=0.45\textwidth]{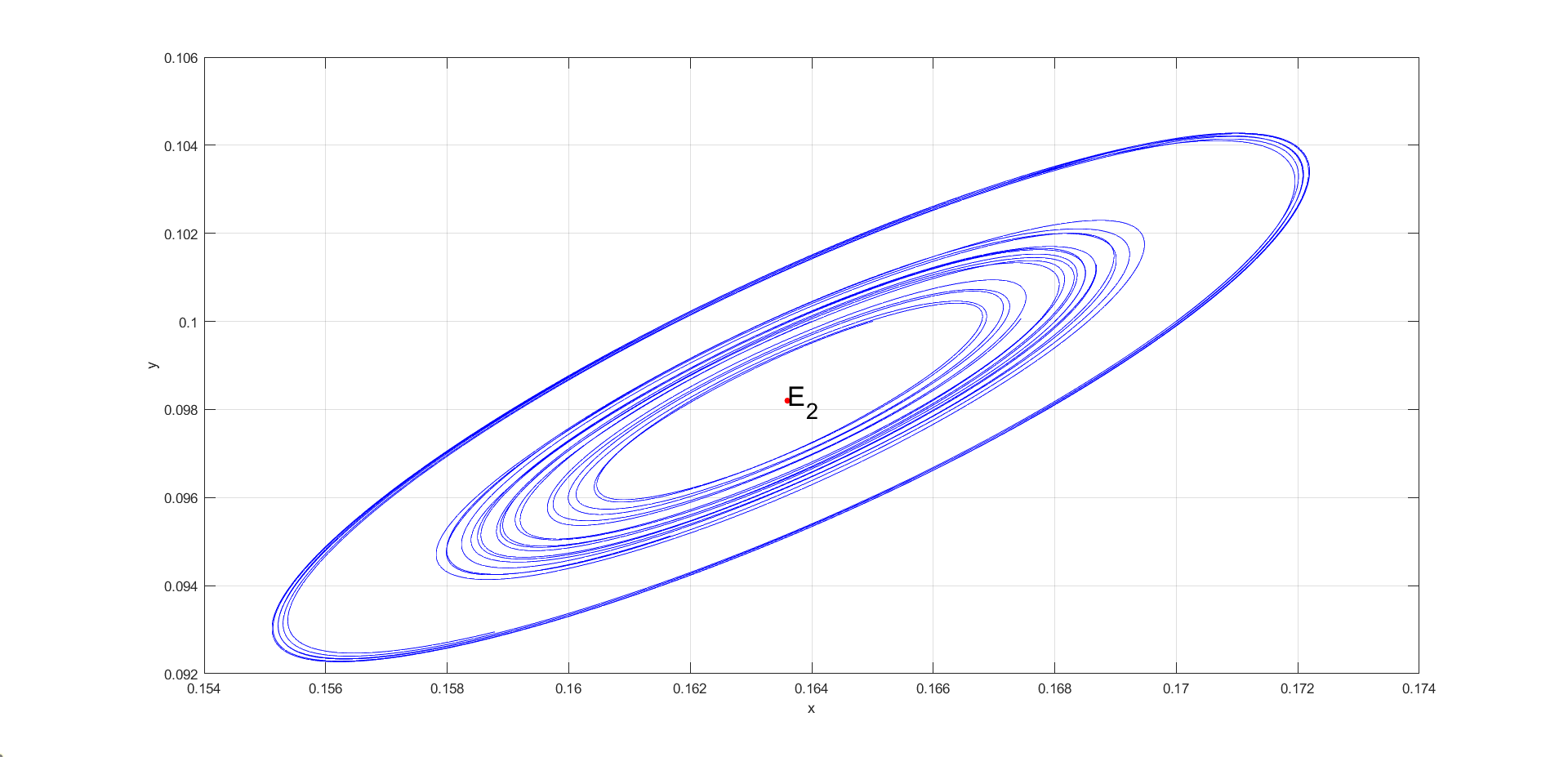}
        \label{fig:subfig1}
    }
    \subfigure[]{
        \includegraphics[width=0.45\textwidth]{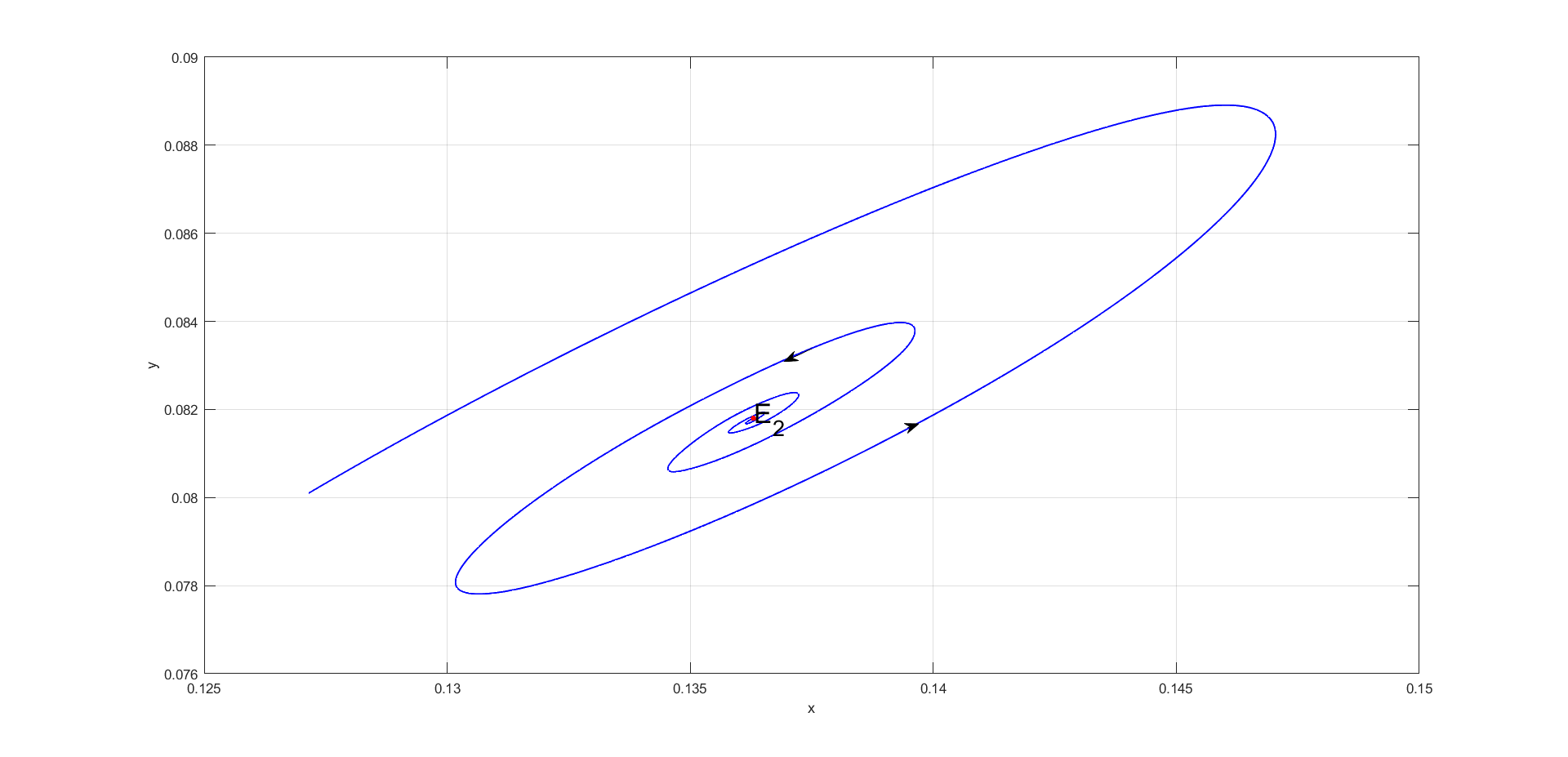}
        \label{fig:subfig2}
    }
    \subfigure[]{
        \includegraphics[width=0.45\textwidth]{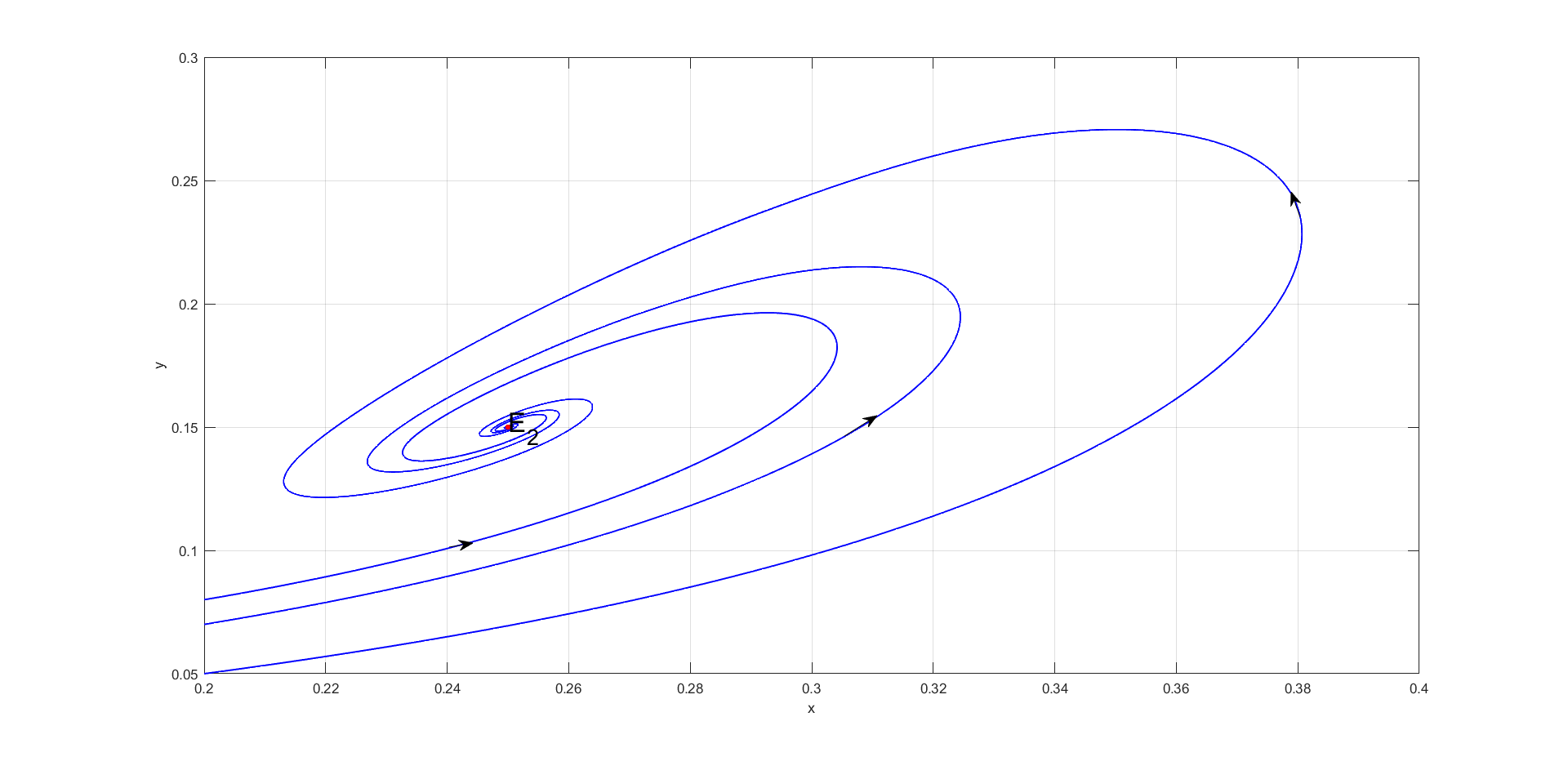}
        \label{fig:subfig3}
    }
    \caption{\small The stability of the $E_2$
    (a) $E_2$ is a center when $c=0.4,\beta=0.6,b=0.0125,d=0.4.$
    (b) $E_2$ is a source when $c=0.45,\beta=0.6,b=0.0125,d=0.38$.
    (b) $E_2$ is a sink when $c=0.3,\beta=0.6,b=0.0125,d=0.5$.}
    \label{fig:all}
\end{figure}

\begin{figure}[!h]
  \begin{center}
   \includegraphics[width=0.5\textwidth]{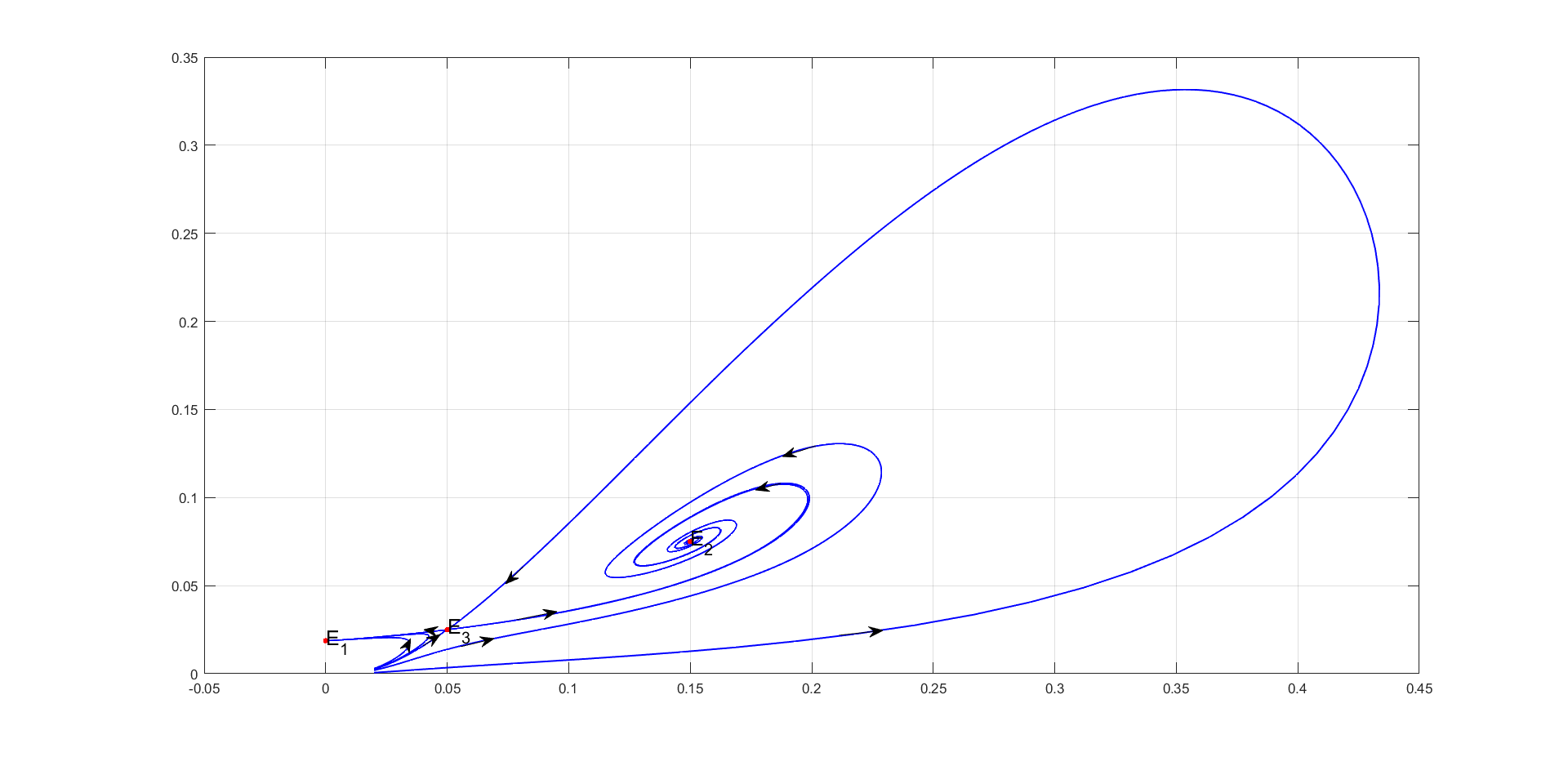}
  \end{center}
  \vskip
-10pt \caption{\small $E_3$ is a saddle point when $c=0.3,\beta=0.5,b=0.0075,d=0.4$.}
\label{Figure1}
\end{figure}

\section{Conclusion}\label{section-5}

Bifurcation analysis of the Gierer-Meinhardt model is carried out. Besides the saddle-node bifurcation and
the Hopf bifurcation, it is found that the degenerate Bogdanov-Takens bifurcation of codimension-3 appears in the model.
That was not reported in the previous results. By a series of transformation and based on the bifurcation theory, including
the Sotomayor's theorem and the normal form method, the  detailed bifurcation results are presented and more interesting dynamics
are revealed. Theoretical findings are verified in numerical simulation.  More further dynamics could be explored for the model.

\section*{Acknowledgments}
This work was supported by the National Natural Science Foundation of China (Nos. 11971032, 62073114).

\section*{Conflict of Interest}
The authors declare that there are no conflicts of interest.

\section*{Data availability statement}

All data generated or analysed during this study are included in this published article or available upon request.

\end{document}